\newtheorem{theorem}{Theorem}
\newtheorem{lemma}{Lemma}
\newtheorem{proposition}{Proposition}
\newcommand{\cdf}{\operatorname{CDF}}
\newcommand{\sign}{\operatorname{sign}}
\begin{document}

\title[]{Robust Online Sampling from Possibly\\ Moving Target Distributions}

\author[]{Fran\c{c}ois Cl\'ement \and Stefan Steinerberger}

\address{Department of Mathematics, University of Washington, Seattle}
 \email{fclement@uw.edu }
 \email{steinerb@uw.edu}

\begin{abstract} 
We suppose we are given a list of points $x_1, \dots, x_n \in \mathbb{R}$, a target probability measure $\mu$ and are asked to add
additional points $x_{n+1}, \dots, x_{n+m}$ so that $x_1, \dots, x_{n+m}$ is as close as possible to the distribution of $\mu$; additionally, we want this to be true uniformly for all $m$. We propose a simple method that achieves this goal. It selects new points in regions where the existing set is lacking points and avoids regions that are already overly crowded. If we replace $\mu$ by another measure $\mu_2$ in the middle of the computation, the method dynamically adjusts and allows us to keep the original sampling points. $x_{n+1}$ can be computed in $\mathcal{O}(n)$ steps and we obtain state-of-the-art results. It appears to be an interesting dynamical system in its own right; we analyze a continuous mean-field version that reflects much of the same behavior. 
\end{abstract}

\subjclass[2020]{}
\keywords{}

\maketitle

\vspace{-0pt}

\section{Introduction and Results}

\subsection{The problem}
We consider the setting where we have a (known) probability measure $\mu$ supported on $\mathbb{R}$ as well as a finite number of points $x_1, \dots, x_n \in \mathbb{R}$. We want to add additional points $x_{n+1}, x_{n+2}, \dots, x_{n+m}$, the goal is to end up with a finite set of points $x_1, \dots, x_{n+m}$ that approximates the measure $\mu$. Since the problem setup is one-dimensional, this problem allows for a relatively easy solution when $m$, the number of points to be added, is fixed: one can place them easily in a way to minimize virtually any metric. We are interested in the setting when $m$ itself is \textit{not known}. The assumption of $m$ being unknown means we want to have \textit{uniformly} good approximation of the prescribed measure $\mu$ for all $m$.

\vspace{-10pt}

\begin{center}
    \begin{figure}[h!]
    \begin{tikzpicture}
        \node at (0,0) {\includegraphics[width=0.65\textwidth]{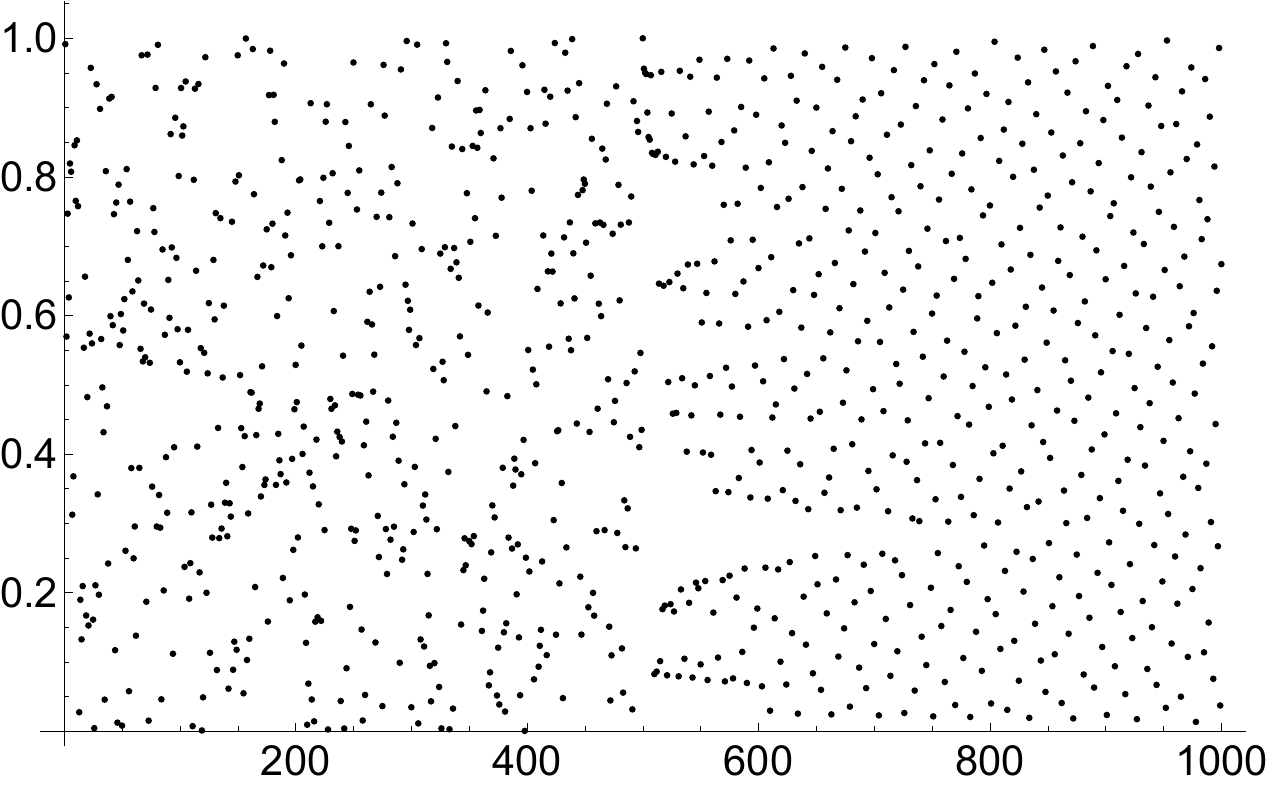}};
    \end{tikzpicture}
    \caption{500 uniformly distributed random variables followed by 500 new points: they initially fix gaps before, around point $\sim 700$, reaching an equilibrium and refining at smaller scales. }
    \end{figure}
\end{center}

Besides being of intrinsic interest, there are several applications.
\begin{enumerate}
    \item We had an initial sampling budget but based on first results would like to sample more points. This could be the Bayes setting, where we have been sampling with respect to a measure $\mu_1$ and found out that we really should sample with respect to a nearby measure $\mu_2$.
    \item We are uncertain about the total sampling budget, or may wish to stop the experiment early based on partial results. Initial samples must still be as uniformly spread out as possible. This is in particular the case in hyperparameter tuning~\cite{Teyt,Hyperband} and Bayesian optimization~\cite{Garnett}. 
    \item We have a set of sampling procedures which individually require samples to be uniformly distributed, and that keep diversity relative to each other. This is for example the case in sequential iterations of CMA-ES~\cite{DiedOpti} or in batch sampling~\cite{Gonza}.
\end{enumerate}

We note that the problem of \textit{uniformly} distributing points is already difficult if we start with no points at all: finding a sequence of points $(x_k)_{k=1}^{\infty}$ with the property that $\left\{x_1, \dots, x_n \right\}$ is very evenly distributed in the unit interval $[0,1]$ \textit{uniformly} in $n$ is a nontrivial problem (for which we obtain state-of-the-art results, see \S 5.2).

\begin{center}
    \begin{figure}[h!]
    \begin{tikzpicture}
        \node at (0,0) {\includegraphics[width=0.6\textwidth]{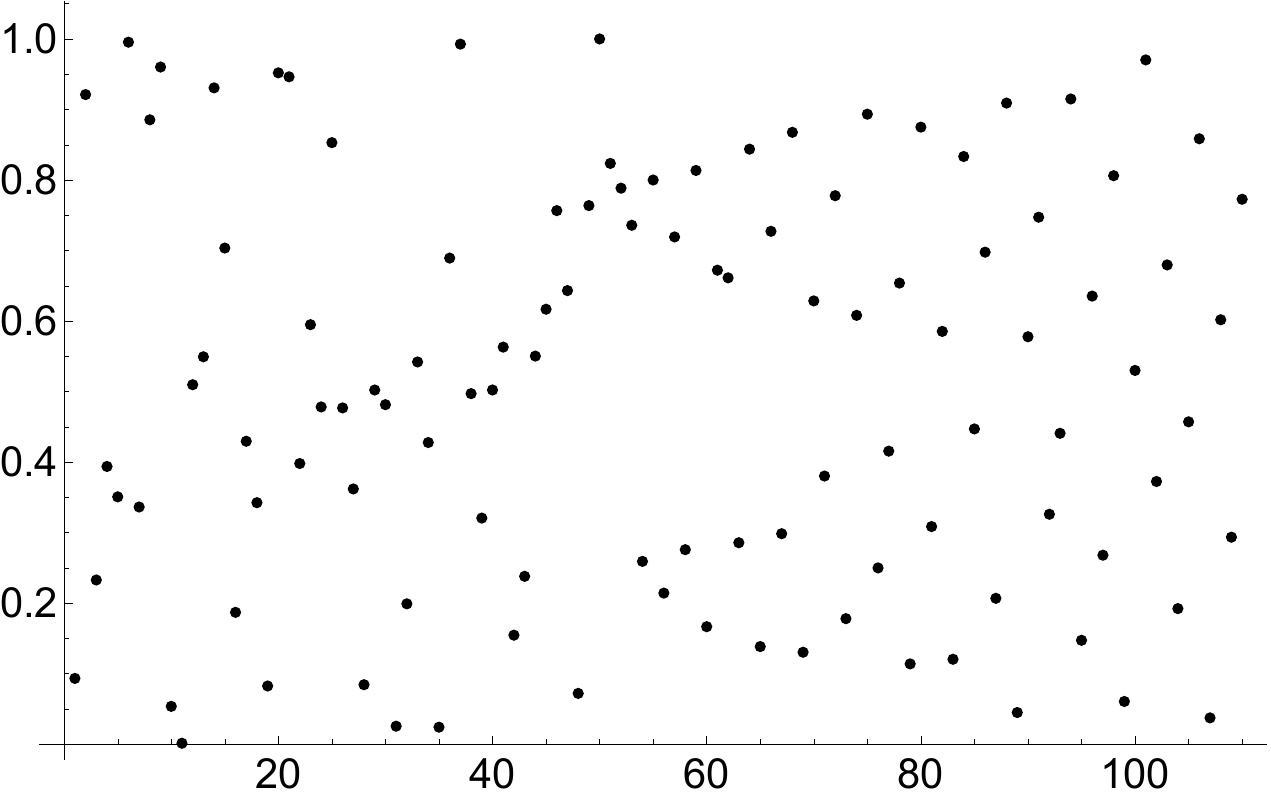}};
        \node at (-4, -4) {\includegraphics[width=0.2\textwidth]{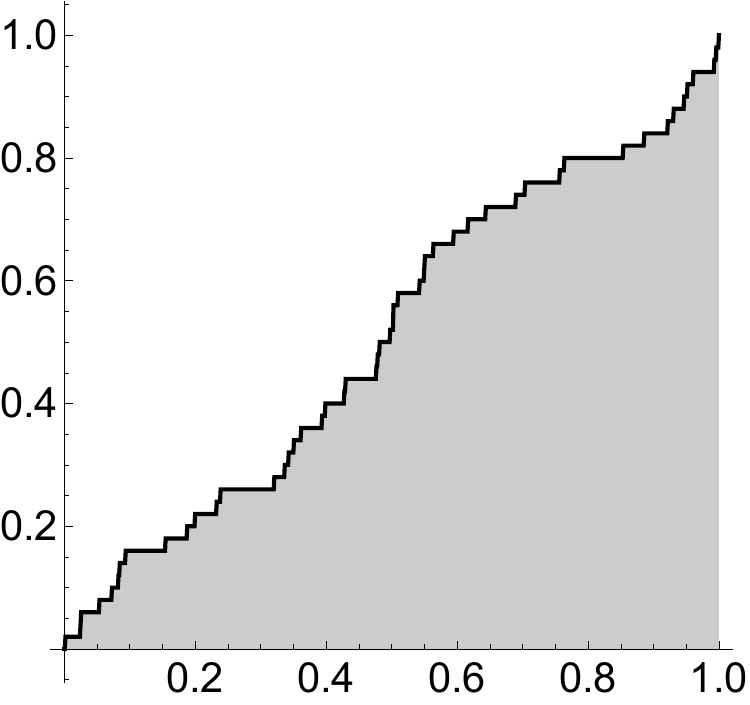}};
        \node at (-4, -5.5) {after 50};
        \node at (-1, -4) {\includegraphics[width=0.2\textwidth]{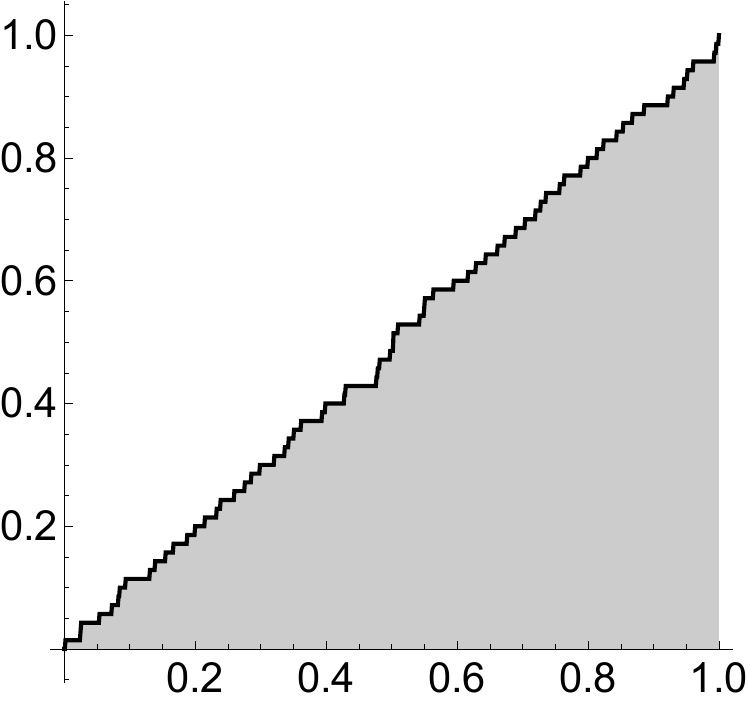}};
            \node at (-1, -5.5) {after 70};
        \node at (2, -4) {\includegraphics[width=0.2\textwidth]{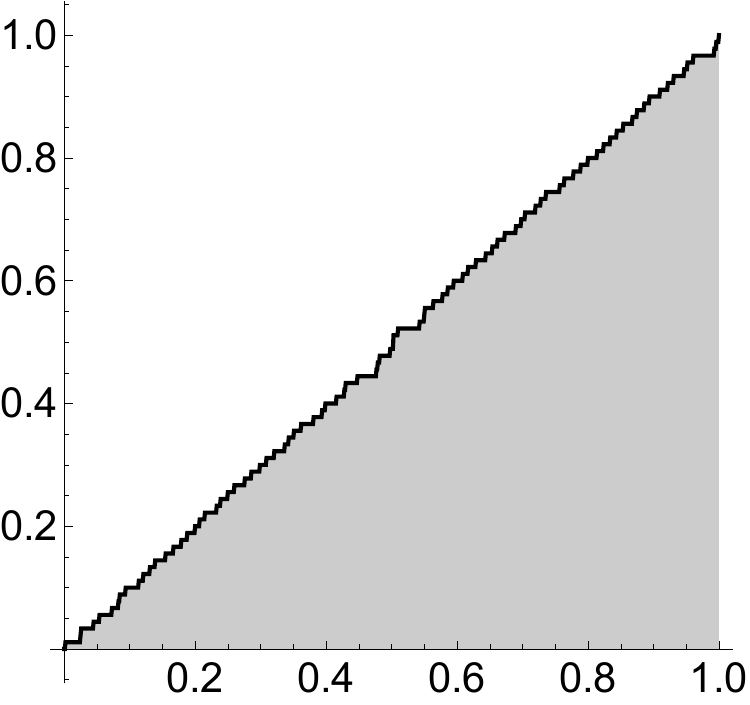}};
           \node at (2, -5.5) {after 90};
        \node at (5, -4) {\includegraphics[width=0.2\textwidth]{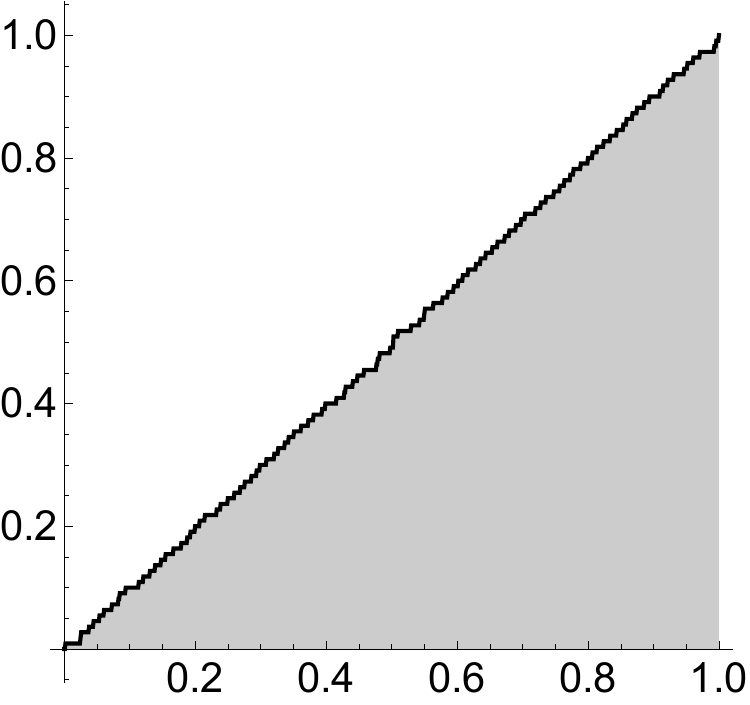}};
           \node at (5, -5.5) {after 110};
    \end{tikzpicture}
    \caption{50 iid random points in $[0,1]$ followed by 60 new points. The new points fill the gaps, the CDF converges quickly to $x$.}
    \end{figure}
\end{center}

\vspace{-20pt}

\subsection{The method.}
The main purpose of this paper is to present a simple algorithm that performs exceedingly well (see \S 5.2). Using the cumulative distribution function of the target measure, it is enough to solve the problem for the uniform distribution on $[0,1]$. The case of general measures is discussed in Fig. \ref{fig:extend} and \S 1.3.
To solve the problem for the uniform measure on $[0,1]$, we define a notion of `energy' for any set of $n$ points in the unit interval via
$$ E(\mathbf{x}) = \min_{\pi \in S_n} \sum_{i=1}^{n}~ \left| x_{\pi(i)} - \frac{i}{n} \right|,$$
where $\pi:\left\{1,2,\dots, n\right\} \rightarrow \left\{1,2,\dots, n\right\}$ ranges over all permutations.
The role of the permutation is to sort the points; if the points are already ordered, $x_1 \leq x_2 \leq \dots \leq x_n$, then the energy simplifies to
$$ E(\mathbf{x}) =\sum_{i=1}^{n}~ \left| x_{i} - \frac{i}{n} \right|.$$
We propose adding $x_{n+1}$ so that it minimizes the energy $E(x_1, \dots, x_n, x_{n+1})$, i.e.
$$ x_{n+1} = \arg \min_{0 \leq x \leq 1} E(x_1, \dots, x_n, x).$$
Greedy methods of this type are easy to implement. Greedy methods tend to work for a while but usually do not perform very well in the long run: errors and mistakes accumulate over time, they have a tendency to get stuck in strange places in the configuration space. Somewhat to our surprise, this does not appear to be the case here; the arising method is simple, fast and completely robust. The robustness stems from an underlying variational structure that we analyze for a continuous limit (see \S 1.6). Moreover, the minimum is easy to find and easy to store because it is a rational number with a priori known denominator.

\begin{theorem} Given $0 \leq x_1 \leq x_2 \leq \dots \leq x_n \leq 1$, then
$$ x_{n+1} = \arg \min_{0 \leq x \leq 1} E(x_1, \dots, x_n, x)$$
can be computed using $\mathcal{O}(n)$ operations. Moreover, $x_{n+1} = k/(n+1)$ is a rational number with $1 \leq k \leq n+1$. If $1 \leq k \leq n-1$, then
$x_k < (k+1)/(n+1) < x_{k+1}$.
\end{theorem}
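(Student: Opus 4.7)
My plan is to study $F(x) := E(x_1,\dots,x_n,x)$ as a function of the single variable $x\in[0,1]$ and exploit its piecewise-linear structure. Setting $x_0 := 0$ and $x_{n+1} := 1$, for any $x\in[x_{p-1},x_p]$ the new point occupies sorted position $p$ among the $n+1$ combined points, while each $x_i$ with $i<p$ keeps position $i$ and each $x_j$ with $j\geq p$ is pushed to position $j+1$. Consequently
\[
F(x) \;=\; A_p \;+\; \left|x-\tfrac{p}{n+1}\right|, \qquad A_p \;:=\; \sum_{i<p}\left|x_i-\tfrac{i}{n+1}\right| \;+\; \sum_{j\geq p}\left|x_j-\tfrac{j+1}{n+1}\right|,
\]
so on every sub-interval $F$ is V-shaped with slopes $\pm 1$ about the target $p/(n+1)$.

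The local minima of $F$ are then easy to locate. Within $(x_{p-1},x_p)$ the only candidate is the V-apex $x=p/(n+1)$, and it is a local minimum exactly when it belongs to the interval (otherwise $F$ is monotone there). At a breakpoint $x=x_p$ the two one-sided formulas agree by continuity, and a short comparison of one-sided slopes rules out $x_p$ as a local minimum (it is at most a local maximum). Hence the global minimum is attained at some $k/(n+1)$ with $1\leq k\leq n+1$, proving the rational form of $x_{n+1}$.

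The $\mathcal{O}(n)$ algorithm follows immediately from this representation. I would precompute the prefix and suffix sums $P_k=\sum_{i<k}|x_i-i/(n+1)|$ and $Q_k=\sum_{j\geq k}|x_j-(j+1)/(n+1)|$ in a single left-to-right pass; then for each $k\in\{1,\dots,n+1\}$ the validity condition $x_{k-1}\leq k/(n+1)\leq x_k$ is tested in constant time, and, when valid, the candidate energy is simply $A_k=P_k+Q_k$. A final linear scan identifies the argmin, and only the integer $k$ needs to be stored.

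For the separation inequalities, suppose $k$ with $1\leq k\leq n-1$ is optimal. The telescoping identity
\[
A_{k+1}-A_k \;=\; \left|x_k - \tfrac{k}{n+1}\right| - \left|x_k - \tfrac{k+1}{n+1}\right|
\]
combined with the validity bound $x_k\geq k/(n+1)$ and the optimality $A_k\leq A_{k+1}$ pins $x_k$ strictly below $(k+1)/(n+1)$; a symmetric calculation comparing $A_{k+1}$ and $A_{k+2}$ (involving $x_{k+1}$ and the optimality of $k$ against $k+2$) yields $(k+1)/(n+1)<x_{k+1}$. I expect the delicate step here to be bookkeeping: when a neighbouring candidate $k\pm 1$ itself fails its validity test, the corresponding energy must be evaluated on a monotone branch of $F$ rather than at a V-apex, and one has to verify case by case that the same slope analysis still forces the strict separation claimed.
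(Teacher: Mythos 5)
Your V-shape decomposition $F(x)=A_p+\bigl|x-\tfrac{p}{n+1}\bigr|$ on $[x_{p-1},x_p]$, together with the observation that a breakpoint $x_p$ cannot be a local minimum (it is where the two V-apexes straddle, hence at most a local maximum), is a clean repackaging of the paper's case-by-case comparison, which instead shows directly that placing the new point in the open interior of an interval beats placing it at an existing $x_i$. Your prefix/suffix-sum $\mathcal{O}(n)$ algorithm is likewise an equivalent reformulation of the paper's incremental update. Both parts are correct and substantively the same as the paper.

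The separation argument, however, does not go through. You claim that $A_k\leq A_{k+1}$, the validity bound $x_k\geq \tfrac{k}{n+1}$, and the telescoping identity
\[
A_{k+1}-A_k=\left|x_k-\tfrac{k}{n+1}\right|-\left|x_k-\tfrac{k+1}{n+1}\right|
\]
force $x_k<\tfrac{k+1}{n+1}$. They do not: if $x_k\geq\tfrac{k+1}{n+1}$, the right-hand side equals $\tfrac{1}{n+1}>0$, which is perfectly compatible with $A_k$ being optimal; the three facts impose no upper bound on $x_k$ at all. Concretely, take $n=5$ and $(x_1,\dots,x_5)=(0.1,\,0.6,\,0.7,\,0.8,\,0.9)$: the valid apexes are $A_2=\tfrac13$, $A_5\approx 0.77$, $A_6\approx 0.73$, so $x_6=\tfrac{2}{6}$ and $k=2$, yet $x_2=0.6>\tfrac{3}{6}$. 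What your structural analysis actually delivers — and all the paper's own proof establishes — is the validity condition $x_{k-1}\leq \tfrac{k}{n+1}\leq x_k$; the paper concludes, in its own indexing, that if $x_{n+1}=\tfrac{k+1}{n+1}$ then $x_k\leq\tfrac{k+1}{n+1}\leq x_{k+1}$, and the theorem's last sentence appears to shift $k$ by one relative to the proof, and also to tighten non-strict to strict. You should simply note that the global minimum is at a valid apex and read off the sandwiching from that; the telescoping detour is unnecessary and, as set up, leads to a false implication.
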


The result tells us that the optimization problem can be solved fairly quickly without any sophisticated numerical methods, the result is easy to store and reuse. Finally, the new point is either going to be at the very beginning, $1/(n+1)$, or at the very end, $1$, or it is going to end up in a location $x \sim k/(n+1)$ so that $[0,x]$ contains $k$ out of $n+1$ points.

 \begin{center}
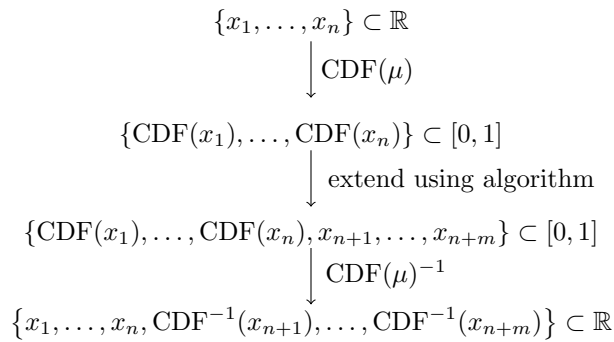
\begin{figure}[h!]
\begin{tikzpicture}
    \node at (0,0) {$ \left\{x_1, \dots, x_n \right\} \subset \mathbb{R}$};
    \draw [->] (0, -0.25) -- (0, -1); 
    \node at (0.75, -0.65) {CDF($\mu$)};
      \node at (0,-1.5) {$ \left\{\cdf(x_1), \dots, \cdf(x_n) \right\} \subset [0,1]$};
    \draw [->] (0, -1.7) -- (0, -2.45); 
    \node at (2, -2.1) {extend using algorithm};
       \node at (0,-2.8) {$ \left\{\cdf(x_1), \dots, \cdf(x_n), x_{n+1}, \dots, x_{n+m} \right\} \subset [0,1]$};   
       \draw [->] (0, -3) -- (0, -3.75); 
           \node at (1, -3.35) {CDF$(\mu)^{-1}$};
       \node at (0,-4) {$ \left\{x_1, \dots, x_n, \cdf^{-1}(x_{n+1}), \dots, \cdf^{-1}(x_{n+m}) \right\} \subset \mathbb{R}$};  
\end{tikzpicture}
\caption{Using the method to extend a set of points on $\mathbb{R}$ with respect to an arbitrary probability measure $\mu$.}
\label{fig:extend}
\end{figure}
\end{center}

\subsection{General distributions} The solution for the uniform distribution in $[0,1]$ immediately generalizes to all other probability measures $\mu$.
 If one is given $x_1, \dots, x_n \in \mathbb{R}$ and if the points are supposed to   approximate the probability measure $\mu$, then we may use the cumulative distribution function $\cdf(\mu)$, apply it to $x_1, \dots, x_n$ and then work in the unit interval.
 The algorithm is outlined in Figure \ref{fig:extend}, an explicit example is given in Fig. \ref{fig:gauss}.

\begin{center}
    \begin{figure}[h!]
    \begin{tikzpicture}
        \node at (0,0) {\includegraphics[width=0.6\textwidth]{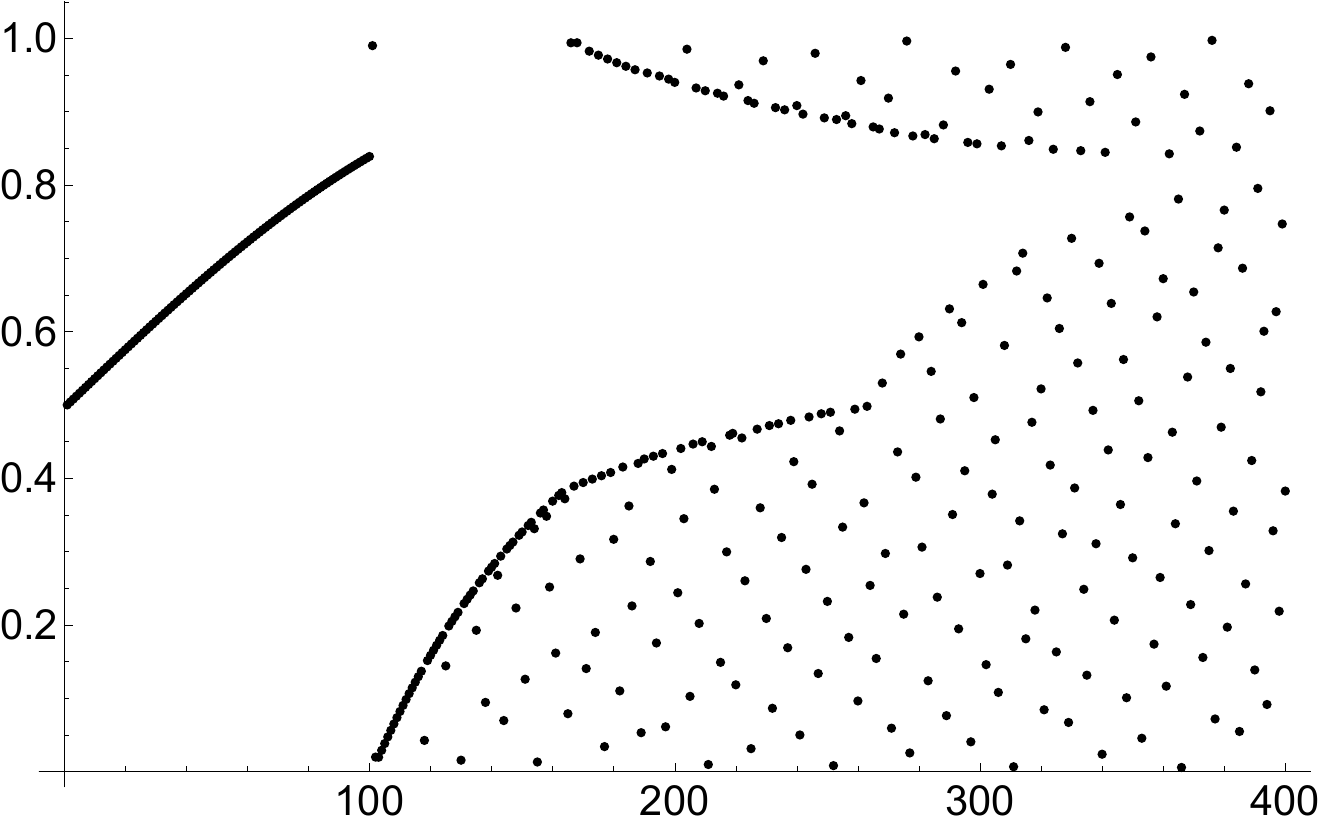}};
          \node at (6,1) {\includegraphics[width=0.38\textwidth]{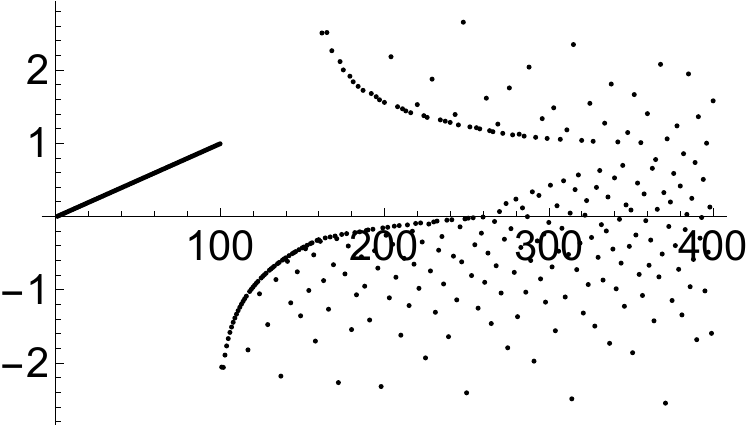}};
      \node at (6,-1.5) {\includegraphics[width=0.2\textwidth]{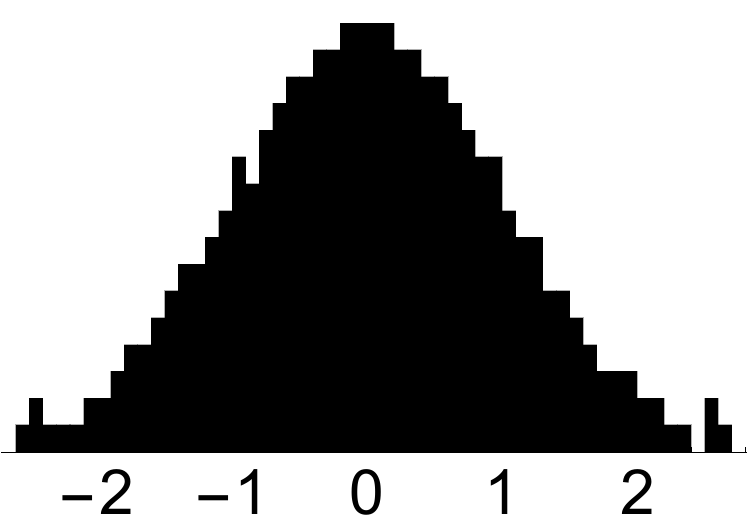}};

    \end{tikzpicture}
    \caption{(Left): the algorithm working to fix things in $\cdf-$space. (Right, top): the new points being added in real space. (Right, bottom): the final set of 400 points.}
      \label{fig:gauss}
    \end{figure}
\end{center}
 
 Suppose we try to approximate the uniform distribution in $[0,1]$ with 100 points: this is easy, we use $0/100, 1/100, 2/100, \dots, 99/100$. Imagine we sample in these points and then realize that the goal is to approximate the standard Gaussian $\mathcal{N}(0,1)$ instead. Is there a way to add new points so that we end up with a Gaussian distribution without throwing away any of the points we already have?  Note that there is a severe imbalance: the Gaussian distribution has only 
 $$ \mbox{a fraction of} \quad \int_0^1 \frac{1}{\sqrt{2\pi}} e^{-x^2/2} ~dx \sim 0.341\dots \quad \mbox{of all its points in}~[0,1].$$
This means that unless we have at least $100/0.341 \sim 293$ points, i.e. we add at least 193 points, this severe imbalance cannot be corrected. The situation is a bit more complicated: the Gaussian restricted to $[0,1]$ is far from uniform, so even more points will be required.
The behavior of the algorithm is shown in Figure \ref{fig:gauss}.
 We use the cumulative distribution function to map the points to a set of points in $[\Phi(0), \Phi(1)] = [0.5, 0.84] \subset [0,1]$ and run the method in the unit interval.  
We observe that in `$\cdf-$space', we reach a nice uniform distribution when adding another $\sim 250$ points. After that, the uniform distribution is then maintained and even further refined. The uniform distribution in $[0,1]$ in $\cdf-$space is mirrored by a high quality approximation of the Gaussian in real space.

\subsection{Where is $x_{n+1}$ going to end up?}
A single step of the algorithm is elementary but $x_{n+1}$ is not easy to predict. The purpose of this section is to explain, roughly, how $x_{n+1}$ behaves and where we can expect to find it. We introduce the discrepancy function
$$ \Delta(x) = \frac{\# \left\{1 \leq i \leq n: x_i \leq x \right\}}{n} - x.$$
It measures, approximately, whether there are `too many' or `not enough' points in $[0,x]$ compared to what one would expect from the uniform distribution. The second is a one-parameter family of functions $h_x:[0,1] \rightarrow [0,1]$ defined by
$$ h_x(y) = \begin{cases} y \qquad &\mbox{if}~y \leq x \\ 
y-1 \qquad &\mbox{if}~y > x. \end{cases}$$
\vspace{-15pt}
\begin{center}
    \begin{figure}[h!]
    \begin{tikzpicture}[scale=0.65]
       \draw [thick] (0,0) -- (3,0); 
       \draw [very thick] (0, -0.1) -- (0, 0.1);
        \draw [very thick] (3, -0.1) -- (3, 0.1);    
      \node at (0, -0.4) {0};  
     \node at (3, -0.4) {1};  
     \draw [very thick] (0,0) -- (1,1);
      \draw [very thick]   (1,-2) -- (3,0);
          \draw [thick] (5+0,0) -- (5+3,0); 
       \draw [very thick] (5+0, -0.1) -- (5+0, 0.1);
        \draw [very thick] (5+3, -0.1) -- (5+3, 0.1);    
      \node at (5+0, -0.4) {0};  
     \node at (5+3, -0.4) {1};  
     \draw [very thick] (5+0,0) -- (5+2,2);
      \draw [very thick]   (5+2,-1) -- (5+3,0);   
     \node at (1.5, -0.5) {$h_{1/3}(y)$};
     \node at (6.5, -0.5) {$h_{2/3}(y)$};
    \end{tikzpicture}
    \end{figure}
\end{center}
\vspace{-20pt}
With these two definitions in place, we can `to first order' (see \S 2.3) predict that the next point will be added so as to maximize a certain integral 
 $$ x_{n+1} \qquad \sim \qquad \arg\max_{0 \leq x \leq 1}  \quad \int_0^1 \sign\left[\Delta(y)\right] \cdot h_x(y) dy.$$
We quickly illustrate this for 4 examples of 1000 random points in the unit interval.

\begin{center}
    \begin{figure}[h!]
    \begin{tikzpicture}
    \node at (0,0) {\includegraphics[width=0.5\textwidth]{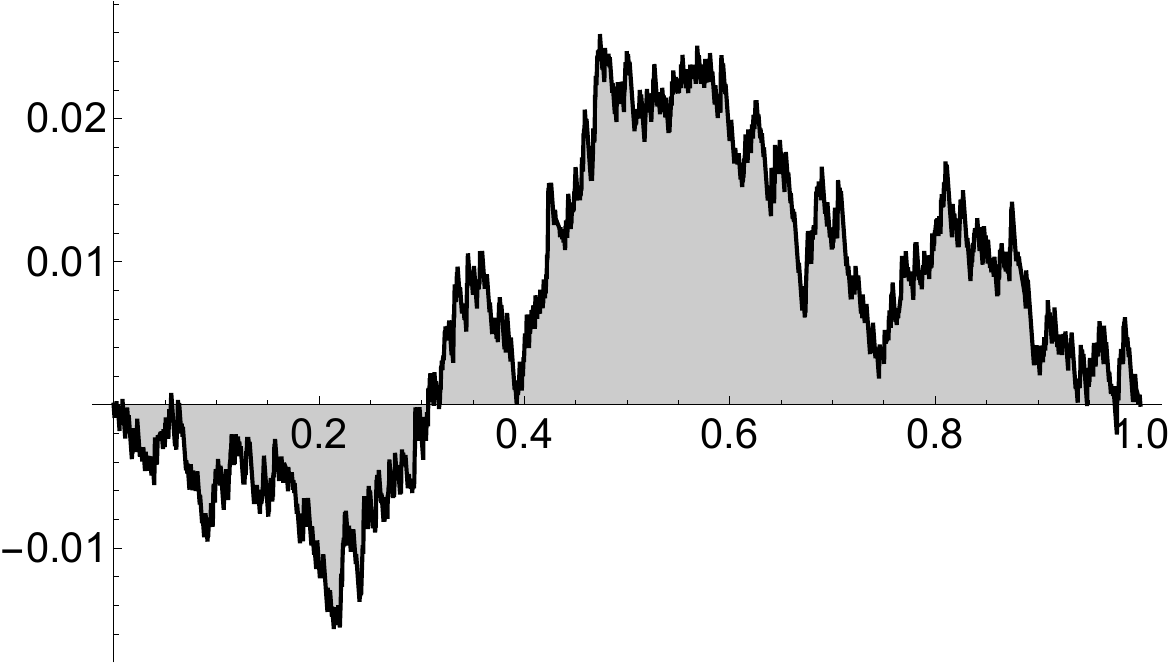}};
    \filldraw [red] (3,-0.38) circle (0.09cm);
    \node at (6.3,0) {\includegraphics[width=0.5\textwidth]{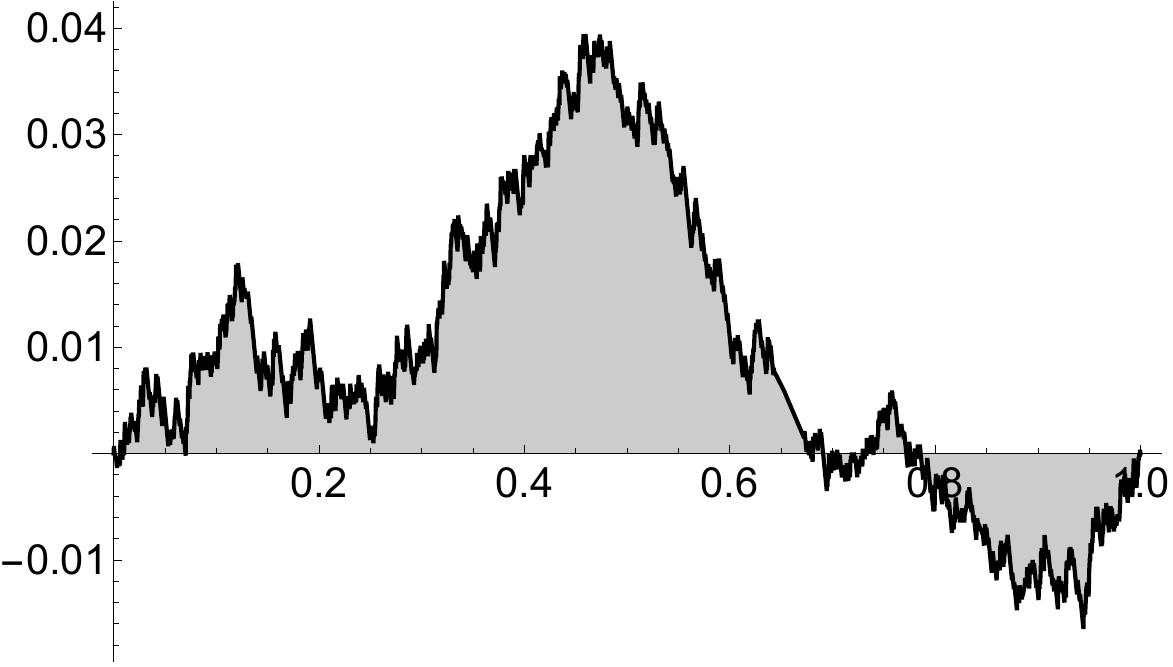}};
      \filldraw [red] (8.1,-0.7) circle (0.09cm);
    \node at (0,-4) {\includegraphics[width=0.5\textwidth]{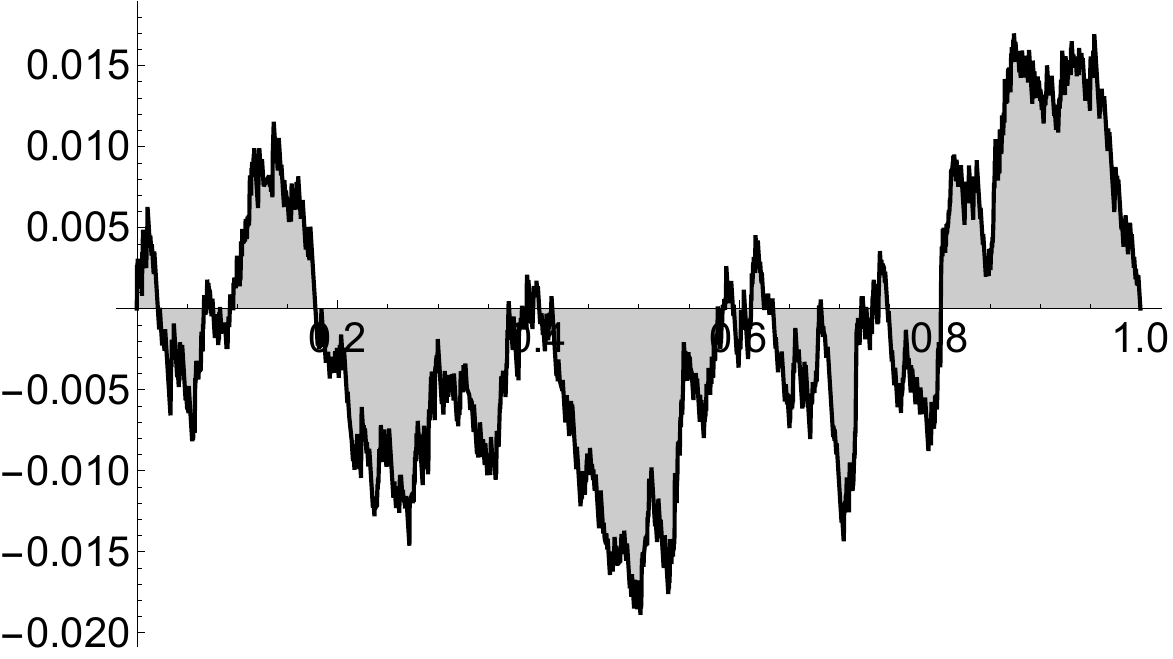}};
        \filldraw [red] (-1.45,-3.9) circle (0.09cm);
    \node at (6.3,-4) {\includegraphics[width=0.5\textwidth]{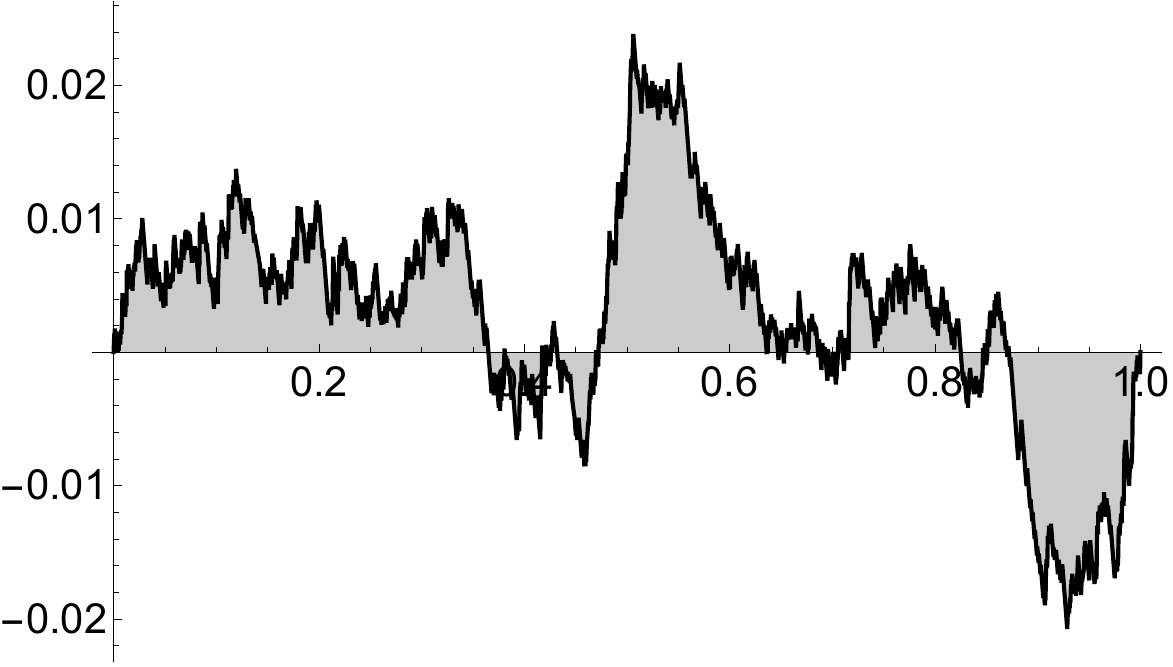}};
   \filldraw [red] (8.4,-4.1) circle (0.09cm);
    \end{tikzpicture}
    \caption{Test your intuition: four examples of 1000 random points, their discrepancy function $\Delta(x)$ (black) and $x_{1001}$ (red).}
    \end{figure}
\end{center}

\subsection{Behavior of the energy} When placing $x_{n+1}$, we do not consider the actual numerical value of the energy, we only minimize it. As it turns out, the behavior of the energy $E(x_1, \dots, x_n)$ as a sequence in $n$ is quite interesting.  Note that if $x_i = i/n$, then the energy is identically 0. However, in that case it will not be 0 in the next step. On average, the energy is at least size $\gtrsim 1$.

\begin{proposition}
    For any $x_1, \dots, x_n, x \in [0,1]$,  we have
    $$ \frac{E(x_1, \dots, x_n) + E(x_1, \dots, x_n, x)}{2} \geq \frac{1}{8}.$$
\end{proposition}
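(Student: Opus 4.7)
My plan is to sum the two energies and derive a lower bound via a term-by-term triangle inequality, taking advantage of the shift in the targets from $i/n$ to $i/(n+1)$ when one more point is added. First I would sort the points so $x_1 \leq \cdots \leq x_n$ and let $k \in \{0, 1, \dots, n\}$ denote the insertion index of the new point $x$, so that $x_k \leq x \leq x_{k+1}$ (using the conventions $x_0 := 0$ and $x_{n+1} := 1$). In the combined sorted list of size $n+1$, each $x_i$ with $i \leq k$ keeps position $i$, while each $x_i$ with $i \geq k+1$ shifts up to position $i+1$, and the new point $x$ lands in position $k+1$. Hence
$$E(\mathbf{x}, x) = \sum_{i=1}^{k} \left| x_i - \tfrac{i}{n+1} \right| + \left| x - \tfrac{k+1}{n+1} \right| + \sum_{i=k+1}^{n} \left| x_i - \tfrac{i+1}{n+1} \right|.$$

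Next I would drop the non-negative middle term and group the two terms in $E(\mathbf{x}) + E(\mathbf{x}, x)$ that involve each $x_i$. The triangle inequality $|a - b| + |a - c| \geq |b - c|$ eliminates $x_i$ entirely, leaving only the distance between its two targets, and the remaining sum is computable in closed form:
$$E(\mathbf{x}) + E(\mathbf{x}, x) \geq \sum_{i=1}^{k} \tfrac{i}{n(n+1)} + \sum_{i=k+1}^{n} \tfrac{n-i}{n(n+1)} = \frac{k(k+1) + (n-k-1)(n-k)}{2n(n+1)},$$
using $|i/n - i/(n+1)| = i/(n(n+1))$ and $|i/n - (i+1)/(n+1)| = (n-i)/(n(n+1))$.

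The last step is to minimize this quadratic-in-$k$ lower bound over $\{0, 1, \dots, n\}$. The continuous minimum sits at $k = (n-1)/2$, giving a value of order $n^2/(4n(n+1)) \to 1/4$, which produces the desired average bound $1/8$. The main obstacle I anticipate is sharpness: the exact discrete minimum evaluates to $(n-1)/(4n)$ or $n/(4(n+1))$ according to the parity of $n$, each below $1/4$ by $O(1/n)$, and the extremal configuration $x_i = i/n$ (where $E(\mathbf{x}) = 0$) saturates all the triangle inequalities, so this bound is sharp within this approach. Closing the $O(1/n)$ gap to obtain exactly $1/8$ uniformly in $n$ should require retaining the middle term $|x - (k+1)/(n+1)|$ and exploiting the constraint $x \in [x_k, x_{k+1}]$, which forces a strictly positive contribution in precisely the configurations that would otherwise saturate the triangle bounds.
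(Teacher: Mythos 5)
Your plan follows the paper's argument step for step: drop the inserted point's contribution to $E(x_1,\dots,x_n,x)$, pair the two terms involving each $x_i$, eliminate $x_i$ via the triangle inequality $|a-b| + |a-c| \geq |b-c|$, and sum the resulting target distances $i/(n(n+1))$ and $(n-i)/(n(n+1))$. You carried the computation further than the paper does (the paper merely asserts ``a short computation then implies the result'') and you are right to be uneasy: the $k$-minimum of the bound you obtain is $(n-1)/(4n)$ for $n$ odd and $n/(4(n+1))$ for $n$ even, both strictly below $1/4$ for every finite $n$.

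Your proposed repair, however, cannot close the gap. In the configuration $x_i = i/n$ with $x = (k+1)/(n+1)$, the new point lands inside $[x_k, x_{k+1}] = [k/n,(k+1)/n]$, the middle term $|x - (k+1)/(n+1)|$ vanishes, and every triangle inequality you invoke holds with equality, so the sum of the two energies is \emph{exactly} $[k(k+1)+(n-k-1)(n-k)]/(2n(n+1))$; taking $k$ near $(n-1)/2$ makes this $<1/4$. Concretely, for $n=3$ with $(x_1,x_2,x_3)=(1/3,2/3,1)$ and $x=1/2$ one gets $E(\mathbf{x})=0$ and $E(\mathbf{x},x)=\tfrac1{12}+0+\tfrac1{12}+0=\tfrac16$, so the average is $1/12 < 1/8$. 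The constant in the Proposition as printed is therefore off by an $O(1/n)$ amount; the paper's own proof stops exactly where yours does, without addressing the shortfall. So you did not miss an idea present in the paper's proof --- you correctly identified a genuine gap, but your suggested fix (retaining the middle term) is doomed for the same reason the original bound falls short.
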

Empirically (see Fig. \ref{fig:numerics1}) this is the correct scale. To make matters more interesting, we also know that the energy level cannot be uniformly bounded. This follows from a deep result of Hal\'asz \cite{halasz} and some classic arguments (see also Graham \cite{graham}).

\begin{theorem} There exists $c>0$ such that if $(x_k)_{k=1}^{\infty}$ is \emph{any} infinite sequence in $[0,1]$, then there are infinitely many $n \in \mathbb{N}$ such that
$$\min_{\pi \in S_n} \sum_{i=1}^{n}~ \left| x_{\pi(i)} - \frac{i}{n} \right|  \geq c \sqrt{\log{n}}.$$
\end{theorem}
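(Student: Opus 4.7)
The plan is to recognize the quantity $\min_\pi \sum_i |x_{\pi(i)} - i/n|$ as (up to normalization) the $L^1$ Wasserstein distance between two empirical measures on $[0,1]$, and then convert this into a statement about the $L^1$ discrepancy of the sequence $(x_k)$ so that Halász's theorem can be invoked directly.

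Concretely, I would set $\mu_n = \frac{1}{n}\sum_{i=1}^n \delta_{x_i}$ and $\nu_n = \frac{1}{n}\sum_{i=1}^n \delta_{i/n}$. Optimal transport on the real line is achieved by the monotone coupling, so
$$ \min_{\pi \in S_n} \sum_{i=1}^n \left|x_{\pi(i)} - \tfrac{i}{n}\right| \;=\; n \cdot W_1(\mu_n,\nu_n) \;=\; n\int_0^1 |F_{\mu_n}(t) - F_{\nu_n}(t)|\,dt, $$
where $F_{\mu_n}$, $F_{\nu_n}$ are the cumulative distribution functions. Since $F_{\nu_n}$ is a step function approximating the identity, one has $|F_{\nu_n}(t) - t| \leq 1/n$ pointwise, so by the triangle inequality
$$ n\int_0^1 |F_{\mu_n}(t) - F_{\nu_n}(t)|\,dt \;\geq\; n\int_0^1 |F_{\mu_n}(t) - t|\,dt \;-\; 1 \;=\; n\,\|\Delta\|_{L^1} - 1,$$
where $\Delta(t) = F_{\mu_n}(t) - t$ is exactly the discrepancy function introduced earlier.

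Now I invoke Halász's theorem \cite{halasz}: for any infinite sequence $(x_k)_{k=1}^\infty$ in $[0,1]$, there are infinitely many $n$ for which the $L^1$ discrepancy satisfies $n\,\|\Delta\|_{L^1} \geq c'\sqrt{\log n}$ for an absolute constant $c' > 0$. This is the ``deep'' ingredient; the paper of Halász proves it via a clever test-function construction refining Roth's orthogonality method, and it is the only nontrivial piece we need. Combining with the inequality above and absorbing the additive $1$ into the constant (say, take $c = c'/2$ for $n$ large enough, and then shrink $c$ further to cover the finitely many small $n$), we get infinitely many $n$ with $\min_\pi \sum_i |x_{\pi(i)} - i/n| \geq c\sqrt{\log n}$.

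The main obstacle is entirely concentrated in the cited Halász bound; the rest of the argument is a routine identification of the matching cost with $W_1$, the reduction of $W_1$ to $\|F_{\mu_n} - F_{\nu_n}\|_{L^1}$ on the line, and a cheap approximation of $F_{\nu_n}$ by the identity that costs only $O(1)$ and is dominated by the $\sqrt{\log n}$ lower bound. No additional work on the infinite-sequence aspect is required, since Halász's theorem is already stated for infinite sequences.
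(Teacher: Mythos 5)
Your reduction from the matching cost to the $L^1$ discrepancy is correct and essentially the same as the paper's, just organized a little differently: you identify $\min_\pi\sum_i|x_{\pi(i)}-i/n|$ with $nW_1(\mu_n,\nu_n)$ via the monotone coupling, use the CDF formula $W_1=\int_0^1|F_{\mu_n}-F_{\nu_n}|$, and then pass to the identity target at $O(1)$ cost since $\|F_{\nu_n}-\mathrm{id}\|_\infty\le 1/n$; the paper instead estimates $W_1(\mu_n,\mathrm{Leb})$ directly by decomposing $[0,1]$ into $n$ intervals and lands on the same bound $|E(x_1,\dots,x_n)-nW_1(\mu_n,\mathrm{Leb})|\le 1$. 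Both bookkeeping choices are fine and both give $E\ge n\|\Delta\|_{L^1}-1$.

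The genuine gap is in how you invoke Hal\'asz. The theorem in \cite{halasz} is a lower bound for finite point \emph{sets} in $[0,1]^2$: for every $N$-point set $\mathcal{P}\subset[0,1]^2$, the $L^1$ norm of the two-dimensional discrepancy $\#\{p\in\mathcal{P}:p\le(x,y)\}-Nxy$ is $\gtrsim\sqrt{\log N}$. It is not formulated there as a statement about the running $L^1$ discrepancy of an infinite one-dimensional sequence along infinitely many $n$. Passing from the 2D set result to the 1D sequence result requires the Roth-type transference: one forms $\mathcal{P}=\{(x_i,i/n):1\le i\le n\}$, applies Hal\'asz in 2D, and then unpacks the 2D $L^1$ discrepancy as an integral over $y\in[0,1]$ of the 1D $L^1$ discrepancies of the initial segments of length $\lfloor yn\rfloor$; if all but finitely many initial segments had $L^1$ discrepancy $\le\varepsilon\sqrt{\log n}$, the 2D integral would be too small, contradiction. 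This transference is precisely where the ``infinitely many $n$'' comes from, and it is what the paper's proof of this theorem is mostly devoted to. Your remark that ``no additional work on the infinite-sequence aspect is required, since Hal\'asz's theorem is already stated for infinite sequences'' is therefore not accurate: you either need to carry out the transference (a few lines, but they must be there) or cite a secondary source (e.g.\ Beck--Chen \cite{beckchen} or Drmota--Tichy \cite{drmota}) where the one-dimensional corollary is recorded.
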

A nontrivial rate of this type indicates a level of unavoidable complexity and seems to rule out certain naive approaches when trying to understand the behavior of the energies of the greedy sequence.
It is known that the rate $\sqrt{\log{n}}$ is attained by some sequences \cite{steinwass}. We do not know whether the greedy procedure also attains that rate. However, is easy to do a numerical simulation.  Starting with the set $\left\{1/3, 1/2\right\}$, we computed the first $500\,000$ elements and their energies.
\begin{center}
    \begin{figure}[h!]
    \begin{tikzpicture}
\node at (0,0) {\includegraphics[width=0.4\textwidth]{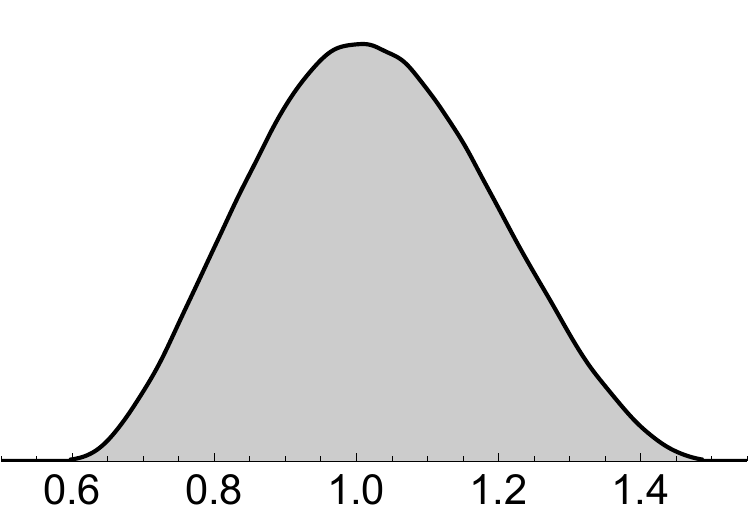}};
\node at (6,0) {\includegraphics[width=0.3\textwidth]{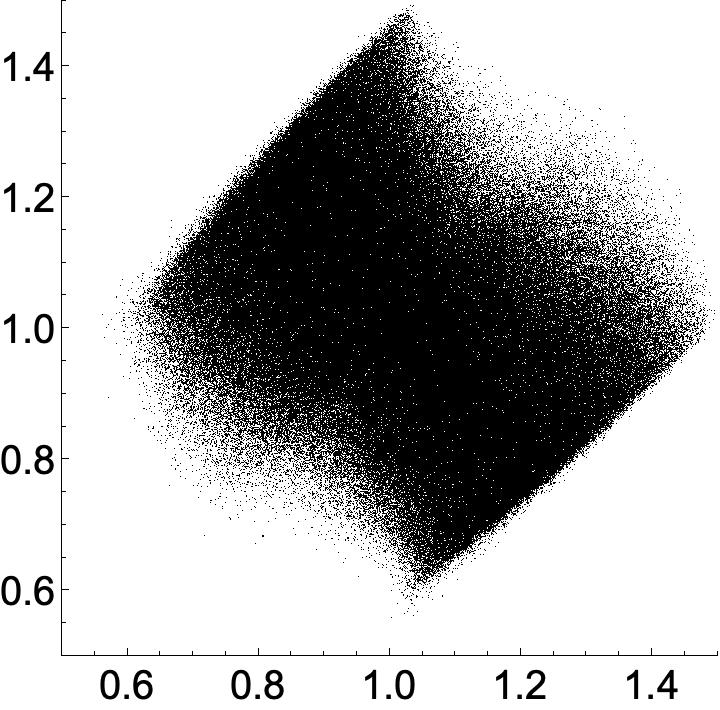}};
    \end{tikzpicture}
    \caption{Left: distribution of the first $500.000$ energies. Right: distribution of pairs of consecutive energies $(E_n, E_{n+1})_{n=1}^{5 \cdot 10^5}$.}
    \label{fig:numerics1}
    \end{figure}
\end{center}

 The energies are all in the range $[0.5, 1.5]$ and seem to be essentially stationary: $\sqrt{\log{n}}$ grows very slowly and this setting is an example of that. Plotting the distribution of two consecutive energy values $(E_{n}, E_{n+1})$ in the $xy-$plane shows an intricate structure: small values tend to be followed by fairly large values, while fairly large values tend to be followed by small values (see Fig. \ref{fig:numerics1}).
To make matters even more intriguing, if we plot three consecutive energies, that is $(E_n, E_{n+1}, E_{n+2})$ as a point in three dimensions, we observe a clear separation into two distinct clusters.  A projection onto a suitable one-dimensional axis shows an approximately bimodal behavior that seems to roughly describe the two `modes' (small, large, small) and (large, small, large). This is evidence of intricate dynamical behavior. We do not know whether this is what one might call a `small $n$ phenomenon', something that disappears asymptotically, or whether there is a more concrete Theorem lurking in the background. 

\begin{center}
    \begin{figure}[h!]
    \begin{tikzpicture}
\node at (-1,0) {\includegraphics[width=0.35\textwidth]{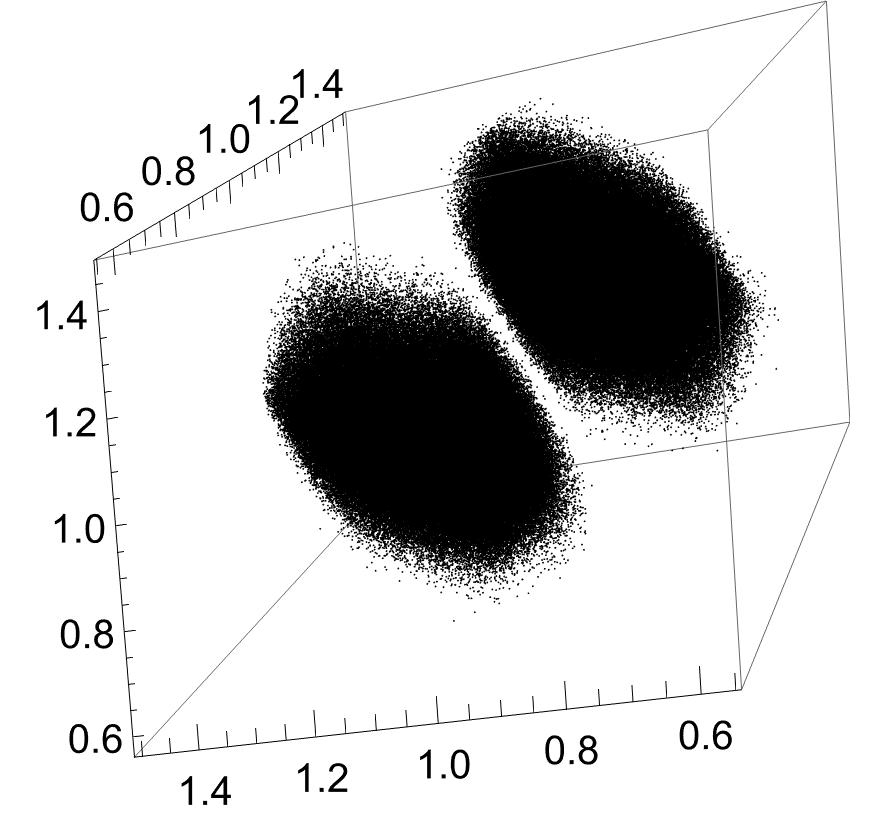}};
\draw [->] (2,0) -- (4,0);
\node at (6.8,0) {\includegraphics[width=0.35\textwidth]{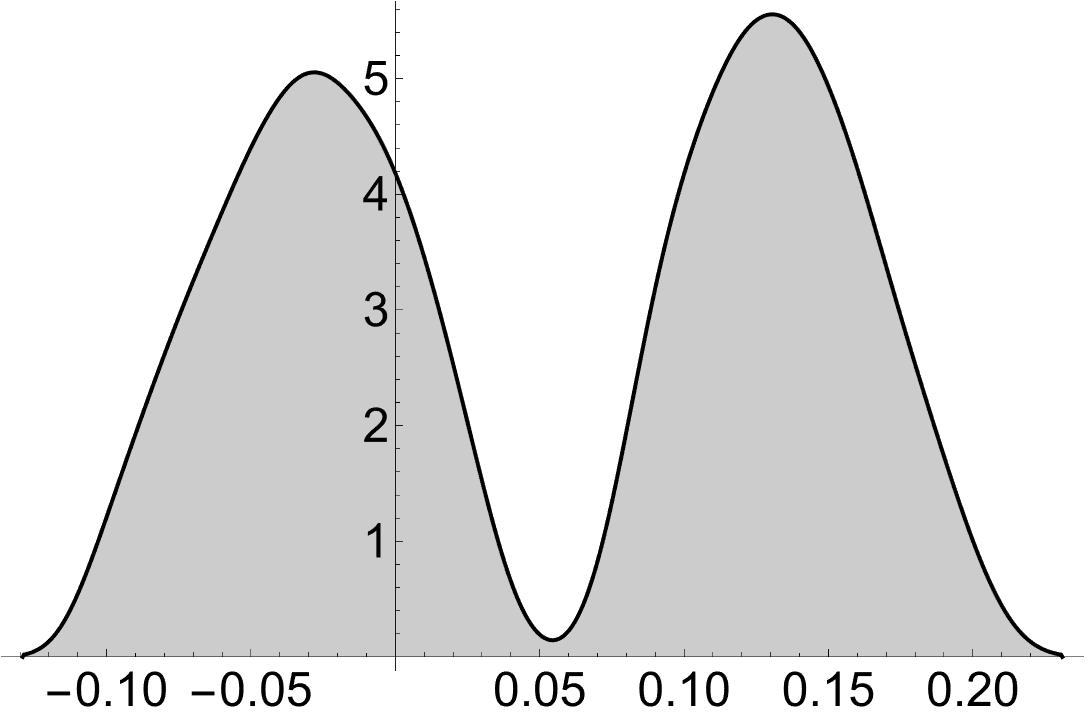}};
\node at (3, 0.5) {$0.2 x - 0.35y + 0.2z$};

    \end{tikzpicture}
    \caption{Left: the point cloud $(E_n, E_{n+1}, E_{n+2})$. Right: a projection onto a one-dimensional line.}
    \label{fig:numerics}
    \end{figure}
\end{center}

\textbf{Lyapunov functions.} We propose a greedy method to add $x_{n+1}$ to an existing set of points $x_1, \dots, x_n$. One way to control such processes is to find a Lyapunov function that can help control the dynamical behavior. An obvious candidate is the energy itself since that is what is being minimized. It would be helpful if there was a statement of the form `if $E(x_1, \dots, x_n)$ is very large, then $E(x_1, \dots, x_{n+1})$ is smaller'. Unfortunately, this is false (albeit usually true).

\begin{proposition}
    There exists a set of $n$ points such that
    $$ E(x_1, \dots, x_n) \geq \frac{n}{100}$$
    as well as, for all $0 \leq x \leq 1$,
    $$ E(x_1, \dots, x_{n},x) \geq E(x_1, \dots, x_n) + \frac{1}{10}.$$
\end{proposition}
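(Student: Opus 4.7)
The plan is to construct an explicit counterexample. Fix an integer $n$ and a small parameter $C > 0$, and set $x_i = i/n + C(n - i)$ for $i = 1, \ldots, n$. This sequence is automatically sorted, lies in $[0, 1]$, and satisfies $x_n = 1$. Each $x_i$ sits to the right of its ideal slot $i/n$ by exactly $C(n - i)$, so a direct summation gives $E(x_1, \ldots, x_n) = C\sum_{i=1}^n (n - i) = C\,n(n - 1)/2$, which exceeds $n/100$ as soon as $C \geq 1/(50(n-1))$.

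For the second claim I would invoke Theorem~1: the minimum of $E(x_1, \ldots, x_n, x)$ over $x \in [0, 1]$ is attained at one of the rational points $k/(n+1)$ with $k \in \{1, \ldots, n+1\}$, so it is enough to test these finitely many candidates. A short check shows that for $C < 1/(n(n+1))$, the slot $(k+1)/(n+1)$ lies strictly inside $[x_k, x_{k+1}]$ for every $k \in \{0, 1, \ldots, n-1\}$ (with the convention $x_0 = 0$), while $k = n$ corresponds to placing the insertion at $x = 1$. Placing the new point exactly at its target slot contributes zero to the new energy, and the remaining contributions read $|x_i - i/(n+1)| = i/(n(n+1)) + C(n - i)$ for $i \leq k$ and $|x_j - (j+1)/(n+1)| = (n-j)\bigl(1/(n(n+1)) - C\bigr)$ for $j > k$. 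Summing these and subtracting $E(x_1, \ldots, x_n)$ produces the quadratic
\[ f(k) = \frac{2k^2 - 2k(n-1) + n(n-1)}{2n(n+1)} + C\bigl(k(2n - k - 1) - n(n - 1)\bigr), \]
whose minimum over $k \in \{0, \ldots, n\}$ must be shown to exceed $1/10$.

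For the concrete choice $n = 10$ and $C = 3/1000$ one verifies directly that $E(x_1, \ldots, x_n) = 135/1000 \geq 1/10$ and that $f(k) = (67\,k^2 - 273\,k + 1530)/11000$, an upward-opening parabola whose minimum over integer $k$ is attained at $k = 2$ with value $313/2750 \approx 0.1138 > 1/10$. The main obstacle is the delicate balance between the two constraints on $C$: enlarging $C$ inflates $E(x_1, \ldots, x_n)$ (helping the first bound) but also tilts the points far enough to the right of their slots that the rank-$1$ insertion (at $k = 0$) reduces the energy by too much, making the second bound fail. The two requirements $C n(n-1) \gtrsim 1/50$ and $C n(n-1) \lesssim (n-1)/(2(n+1)) - 1/10$ are jointly satisfiable only for moderate $n$, but any admissible pair yields a counterexample and $(n, C) = (10, 3/1000)$ is a convenient working point.
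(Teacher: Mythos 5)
Your construction is correct and genuinely different from the paper's. You take a small, linearly tilted grid $x_i = i/n + C(n-i)$ with $n=10$ and $C = 3/1000$; the paper instead uses $n = 4m$ with $m$ points piled at $0$, $2m$ points at $k/n$ for $m < k \leq 3m$, and $m$ points piled at $1$, for which $E \sim n/8$. I verified your numbers: $E(x_1,\ldots,x_{10}) = 45C = 135/1000 \geq 10/100$; the constraint $C < 1/(n(n+1)) = 1/110$ does put every slot $(k+1)/(n+1)$ strictly inside $[x_k, x_{k+1}]$, so by the structure established in Theorem 1 only the slot points need checking; the increment $f(k) = (67k^2 - 273k + 1530)/11000$ is a convex quadratic with vertex near $k \approx 2.04$, giving $\min_k f(k) = f(2) = 313/2750 \approx 0.1138 > 1/10$. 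All the algebra checks out.

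The tradeoff is worth noting. Your two constraints $C \gtrsim 1/(50(n-1))$ and $C < 1/(n(n+1))$ are only jointly satisfiable in a bounded range of $n$ (roughly $2 \leq n \leq 47$), so your family of examples cannot be pushed to large $n$; the paper's piling construction works for all $n = 4m$, which makes the $n/100$ lower bound feel more substantive and also supports the paper's follow-up remark that $E(x_1,\ldots,x_{n+3})$ drops back below $E(x_1,\ldots,x_n)$. As a proof of the proposition \emph{as literally stated} (existence of a single set of $n$ points), your argument is complete; it is more computational and less structural than the paper's, but it has the advantage of being an entirely self-contained finite verification rather than an asymptotic estimate with $o(n)$ terms.
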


The example is completely explicit. This should not be misconstrued to mean that the method fails. Indeed, in this particular example, we also have
$$ E(x_1, \dots, x_{n+3}) < E(x_1, \dots, x_n) - \frac{1}{10}$$
and the method performs quite beautifully after that, it does seem to be a purely local obstruction. However, it shows that the energy landscape is at least somewhat intricate and non-convex. This limits what one can hope to achieve with arguments that only consider the choice of the next point, $x_{n+1}$. Lyapunov functions may exist, we have been unable to find any. The continuous setting is much better behaved and more can be said, see \S 1.6.

\subsection{A continuous limit}
The difficulties in rigorously understanding the process seem to be rooted in the fact that the energy can be as small as $\sim 1$. Adding a single point can lead to global changes and may change the energy also at scale $\sim 1$. Adding a single point, even if there are already \textit{many} points present, may not be a `small' change. As is often the case, if a process is extremely adaptive to minuscule changes in the local structure, this makes the process more difficult to analyze. The purpose of this section is to introduce a continuous limit of the process that behaves `essentially' like the discrete process, can be analyzed and may, at least partially, explain why the discrete process works so well.\\

\textbf{The setting.} Let $\mu$ be a probability measure on $[0,1]$. For ease of exposition, we assume that it is absolutely continuous with respect to the Lebesgue measure and that $\phi(x) = [d\mu/dx]$ has a smooth density that is bounded away from 0 and from above. If $\mu= \phi(x) dx$ and if we denote its cumulative distribution function by
$$ \Phi(x) = \int_{0}^{x} \phi(y) dy,$$
then the continuous analogue of our energy (see \S 4 for a derivation) is
$$ E(\mu) = \int_0^1 \left| \Phi^{-1}(x) -x \right|dx = \int_0^1 \left| x - \Phi(x) \right| \phi(x)dx.$$
Clearly, $E(\mu) \geq 0$ with equality if and only if $\mu$ is the uniform distribution.
One could consider the behavior of the functional under general perturbations of the measure $\mu \rightarrow \mu + \varepsilon \nu$, however, this would not accurately represent the discrete process: in the discrete process we add a single point at a time.  To account for that, we revisit the discrete setting where we have an empirical measure
$$ \mu_n = \frac{1}{n} \sum_{i=1}^{n} \delta_{x_i} \qquad \mbox{and replace it by} \qquad  \mu_{n+1} = \frac{1}{n+1} \left(\sum_{i=1}^{n} \delta_{x_i} + \delta_{x_{n+1}}\right)$$
in the next step. The new measure can be rewritten as
$$ \mu_{n+1} = \left(1-\frac{1}{n+1}\right) \mu_n ~+~ \frac{1}{n+1} \delta_{x_{n+1}}.$$
This suggests the appropriate type of perturbation: rescaling the original measure and accounting for the loss of mass in the direction of a Dirac measure. Formally, given the probability measure $\mu = \phi(x) dx$ on $[0,1]$ and a point $0 \leq x \leq 1$, we define the change of energy under adding an infinitesimal amount of mass in $x$ as
$$ \frac{\partial E(\mu)}{\partial x} := \lim_{\varepsilon \rightarrow 0^+} \frac{1}{\varepsilon} \left( E( (1-\varepsilon) \mu + \varepsilon \delta_x) - E(\mu)\right).$$
This formal derivative $\partial E(\mu)/\partial x$ can be computed exactly and has some \textit{very} interesting properties. For example, the derivative can only be minimal in fixed points $\Phi(x) = x$ (compare with Theorem 1). These properties make it possible for the continuous functional to have all the structural properties that we were able to establish in the discrete setting (see Theorem 3).
Most importantly, the continuous limit does not allow for spurious local minima. Moreover, if one is far from the uniform distribution (in two concrete ways), then there is an upper bound on the gradient and more rapid improvement.
\begin{theorem}
    Let $\mu = \phi(x) dx$ be a probability measure on $[0,1]$ with $0 < c < \phi(x) < C < \infty$ such that $x = \Phi(x)$ has only finitely many solutions.
    Then
    $$ \min_{0 \leq x \leq 1}  \frac{\partial E(\mu)}{\partial x} < 0.$$
    The global minimum of $\partial E(\mu)/\partial x$ is attained in a fixed point of the cumulative distribution function, it satisfies $\Phi(x) = x$. If $\phi$ is `far from the uniform distribution', then the gradient is `very' negative in a quantitative sense:
    \begin{enumerate}
        \item    for every interval $J \subset [0,1]$ where either $\Phi(x) > x$ or $\Phi(x) < x$, 
    $$ \min_{0 \leq x \leq 1}  \frac{\partial E(\mu)}{\partial x} \leq - \frac{|J|^2}{4}$$
    \item and if the equation $\Phi(x) = x$ has only $S$ solutions, then
        $$ \min_{0 \leq x \leq 1}  \frac{\partial E(\mu)}{\partial x} \leq - \frac{1}{4S}.$$
    \end{enumerate}
\end{theorem}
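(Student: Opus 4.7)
The first task is to compute $\partial E(\mu)/\partial x$ in closed form. Starting from $E(\mu)=\int_0^1 |\Phi^{-1}(t)-t|\,dt$ and the perturbed CDF $\Phi_\varepsilon(y)=(1-\varepsilon)\Phi(y)+\varepsilon\mathbf{1}_{y\geq x}$, I split $E(\mu_\varepsilon)=\int_0^1 |\Phi_\varepsilon^{-1}(t)-t|\,dt$ into three parts determined by the size-$\varepsilon$ jump of $\Phi_\varepsilon^{-1}$ at $t=(1-\varepsilon)\Phi(x)$. On the two outer parts I Taylor-expand $\Phi_\varepsilon^{-1}$ to first order in $\varepsilon$ and substitute $s=\Phi(y)$; the finite-fixed-point hypothesis guarantees that $\sign(y-\Phi(y))$ is well-defined almost everywhere, so after collecting terms and matching against $E(\mu)$ one obtains
$$\frac{\partial E(\mu)}{\partial x}=|x-\Phi(x)|+\int_0^x \sigma(y)\phi(y)\,dy+C(\mu),\qquad \sigma(y):=\sign(y-\Phi(y)),$$
with $C(\mu)$ independent of $x$.

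\textbf{Structural properties.} Let $G(x)$ denote the $x$-dependent part above. The key cancellation
$$G'(x)=\sigma(x)\bigl(1-\phi(x)\bigr)+\sigma(x)\phi(x)=\sigma(x)\in\{-1,+1\}$$
(for $x$ outside the fixed-point set of $\Phi$) shows that $G$ is piecewise linear with slopes $\pm 1$, so its local minima occur exactly where $\sigma$ jumps from $-1$ to $+1$, i.e.\ at fixed points of $\Phi$. A short Fubini computation on $\int_0^1 G(x)\phi(x)\,dx$ delivers the identity $\int G\phi\,dx+C(\mu)=0$ (morally, perturbing $\mu$ in its own direction yields no change). Hence
$$\min_x\frac{\partial E(\mu)}{\partial x}=\min_x G(x)-\int_0^1 G(y)\phi(y)\,dy\leq 0,$$
with strict inequality unless $G$ is constant, equivalently $\sigma\equiv 0$, equivalently $\mu$ is uniform; since the hypotheses rule this out, the minimum is strictly negative and is attained at a fixed point of $\Phi$.

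\textbf{Quantitative bounds.} For (1), fix a maximal interval $J=(a,b)$ on which $\Phi>\mathrm{id}$, so both endpoints are fixed points. Since $G'=-1$ on $J$, $G(y)-G(b)=b-y$ there, and $\min G\leq G(b)$ gives
$$\int_0^1 (G-\min G)\phi\,dy\geq \int_a^b (b-y)\phi(y)\,dy=\int_a^b\Phi(y)\,dy-|J|\,\Phi(a)\geq\int_a^b y\,dy-a|J|=\frac{|J|^2}{2},$$
where the middle equality is integration by parts using $\Phi(a)=a$ and the final inequality uses $\Phi(y)\geq y$ on $J$. This yields $\min\partial E/\partial x\leq -|J|^2/2\leq -|J|^2/4$; a non-maximal $J$ embeds in a maximal interval of at least the same length. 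The mirror argument on up-intervals (where $\Phi<\mathrm{id}$, $G'=+1$, and $\min G\leq G(a)$) produces the analogous $\int_{J_i}(G-\min G)\phi\,dy\geq |J_i|^2/2$. Summing over the $S-1$ intervals between consecutive fixed points and applying Cauchy--Schwarz with $\sum_i|J_i|=1$ gives $\sum_i|J_i|^2\geq 1/(S-1)$, hence $\min\partial E/\partial x\leq -1/(2(S-1))\leq -1/(4S)$, which is (2).

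\textbf{Main obstacle.} The delicate step is the perturbation expansion that yields the closed-form derivative: the integrand $|u(s)-s+\varepsilon g(s)|$ admits no first-order expansion on the set $\{u(s)=s\}$, so one has to treat a neighborhood of the (finitely many) fixed points of $\Phi$ separately. Under $c<\phi<C$ this neighborhood has measure $O(\varepsilon)$ with an integrand of order $\varepsilon$, so it contributes only $O(\varepsilon^2)$ and does not disturb the linear coefficient. Once the formula for $\partial E/\partial x$ is established, the remaining structural and quantitative parts reduce to elementary one-dimensional calculus.
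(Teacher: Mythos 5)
Your proposal is correct and it establishes all four claims, but it takes a genuinely different route from the paper and is in several places cleaner. The paper's Section 4.2 keeps the raw first-order expansion with the extra factors $\Phi(t)$ and $\Phi(t)-1$ inside the integrals, changes variables to the quantile coordinate $z=\Phi(t)$, proves two separate simplification lemmas (one showing the unconstrained minimizer of the penalized functional is a fixed point, another showing the fixed-point constraint can be dropped from the unpenalized functional), and only then proves a structural inequality (Lemma~5) about an arbitrary $g\colon[0,1]\to\{-1,1\}$ using the identity $h(\alpha)=G(\alpha)-\int_0^1G$ together with the $1$-Lipschitz property. You instead absorb the extra factors at once to get $\partial E/\partial x = |x-\Phi(x)|+\int_0^x\sigma(y)\phi(y)\,dy+C(\mu)$ directly in $x$-space; this formula is correct (I verified it: the paper's own closed form has a sign typo, $\sign(\Phi(t)-t)$ where the computation gives $\sign(t-\Phi(t))$, and its ensuing change of variables should run to $\Phi(x)$ rather than $\Phi^{-1}(x)$, but these do not affect the structural lemmas since they treat $g$ as an arbitrary sign pattern). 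Your cancellation $G'=\sigma(x)(1-\phi(x))+\sigma(x)\phi(x)=\sigma(x)$ packages the two simplification lemmas into one step: the $|x-\Phi(x)|$ term and the integral term always combine so that $G$ is automatically $\pm1$-piecewise-linear, so local minima are automatically at fixed points and one never needs the constrained-versus-unconstrained comparison. Your mean-zero identity is with respect to $\mu$ rather than Lebesgue measure, which is the same statement after the change of variables but has the nicer interpretation you note (perturbing $\mu$ in its own direction is the identity, and a Fubini computation makes this rigorous even though $E$ is nonlinear). Finally, your quantitative step uses integration by parts $\int_a^b(b-y)\phi\,dy=\int_a^b\Phi\,dy-|J|\Phi(a)\geq|J|^2/2$ on each gap $J$ together with Cauchy--Schwarz, which is more direct than the paper's Lipschitz-triangle estimate and actually yields the sharper constants $-|J|^2/2$ and $-1/(2(S-1))$, both of which imply the stated $-|J|^2/4$ and $-1/(4S)$. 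The main thing you would want to spell out in a full write-up is the perturbation expansion itself (your "main obstacle" paragraph correctly identifies the issue near the fixed-point set and the $O(\varepsilon)\times O(\varepsilon)$ estimate that disposes of it), and the easy observation that the endpoints $0,1$ are always fixed points so the global minimum of $G$, whether interior or boundary, is always at a fixed point.
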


The result shows an absence of spurious local minima. Moreover, the best way to decrease the energy is to add additional mass into one of the solutions of $\Phi(x) = x$. We note that this equation always has at least two solutions, $0$ and $1$. Furthermore, if there is a large gap between two consecutive solutions of $\Phi(x) = x$, then the gradient is quite negative and dramatic improvement is possible; the same is true if there are only few solutions of $\Phi(x) = x$.

\subsection{Related results} The special case where we start with an empty set of points and try to place a sequence of points in $[0,1]$ so that the first $n$ points approximate the uniform distribution uniformly in $n$ is an old problem. It has a well-developed theory, we refer to the classic books by Beck and Chen \cite{beckchen}, Chazelle \cite{chaz}, Dick and Pillichshammer \cite{dick}, Drmota and Tichy \cite{drmota} and Kuipers and Niederreiter \cite{kuipers}. A very rough summary of results would be
\begin{enumerate}
    \item The situation is somewhat understood in $[0,1]$ with regards to the leading order of magnitude of many `error' quantities \cite{tat, corput1, corput2, graham, schm}.
    \item The problem of finding the `best' example with regards to a natural metric is wide open even in $d=1$ dimension \cite{puch, ost}
    \item and there are few examples of `nice' sequences in $[0,1]$ whose construction usually relies on either combinatorial or number-theoretic principles.
    \item The situation is not understood in $[0,1]^d$ when $d \geq 2$ \cite{bilyk, roth}
    \item and, when $d \geq 2$, it is not even clear what to conjecture \cite{bilyk0}.
\end{enumerate}
Our work grew out of a desire to understand whether constructions based on either greedy or potential-theoretic `energy-based' methods could be effective in constructing new types of such sequences in $[0,1]$ with the ultimate goal being new such examples in $[0,1]^d$. Related results in this direction are 
\cite{brown, brown2, clement, clement2, clement3, kritzinger, pausinger, stein4, stein3, stein2, stein5}. We are not aware of this type of question (starting with a given initial set, updating in a way to be close to a given target measure) having been studied anywhere. If we start with an empty set and know in advance how many points to add, this leads us to the question of quantization of probability measures which is very well studied \cite{graf}. If the number of points is not known, this leads to questions of uniform distribution and the results cited above. A greedy variant of quantization was studied by Luschgy-Pag\`es \cite{lusch} (see also \cite{lusch2}). Some related ideas are presented in \cite{gomez, pron, pron2}.

\section{Proof of Theorem 1}
\subsection{Balancing property.} We first show that the new point is always a rational number of the form $x_{n+1} = k/(n+1)$ with $1 \leq k \leq n$.
\begin{proof}  We may assume that the first $n$ points are ordered $0 \leq x_1 \leq x_2 \leq \dots \leq x_n \leq 1$. Suppose we now add a point so as to minimize the energy for $n+1$ points. If the new point being added satisfies $x \leq x_1$, then the energy is
$$ E(x,x_1, \dots, x_n) = \left|x - \frac{1}{n+1} \right| + \sum_{i=1}^{n} \left|x_i - \frac{i+1}{n+1} \right|.$$
If $1/(n+1) \leq x_1$, then the obviously optimal choice is $x = 1/(n+1)$. We will now argue that in the case $1/(n+1) > x_1$, adding the new point in $[0,x_1]$ cannot be optimal. For this sake, let us define $\ell \in \mathbb{N}_{\geq 0}$ so that $x_1 = \dots = x_{1+\ell} < x_{2 + \ell}$ keeps track of how many points are located in $x_1$. If $1/(n+1) > x_1$ and we add the new point in $[0,x_1]$, then the energy is minimized by making $x$ as large as the restriction allows and we get
\begin{align*}
     E_1(x,x_1, \dots, x_n) &= \left|x - \frac{1}{n+1} \right| + \sum_{i=1}^{n} \left|x_i - \frac{i+1}{n+1} \right| \\
     &= \left|x_1 - \frac{1}{n+1} \right| + \sum_{i=1}^{1+\ell} \left|x_i - \frac{i+1}{n+1} \right| + \sum_{i=2+\ell}^{n} \left|x_i - \frac{i+1}{n+1} \right|\\
     &=  \left|x_1 - \frac{1}{n+1} \right| + \sum_{i=1}^{1+\ell} \left|x_1 - \frac{i+1}{n+1} \right| + \sum_{i=2+\ell}^{n} \left|x_i - \frac{i+1}{n+1} \right|.
\end{align*}
We will compare this energy to what happens if we add $x$ in the interval $(x_{1+\ell}, x_{2+\ell})$. This would make $x$ the $(2+\ell)-$th largest point and the energy becomes
\begin{align*}
     E_2(x_1, \dots, x, \dots, x_n) &=\sum_{i=1}^{1+\ell} \left|x_1 - \frac{i}{n+1} \right| + \left|x - \frac{\ell+2}{n+1} \right| + \sum_{i=2+\ell}^{n} \left|x_i - \frac{i+1}{n+1} \right|.
\end{align*}
We first keep $x \in (x_{1+\ell}, x_{2+\ell})$ arbitrary and will then show that $x = x_{1 + \ell} + \varepsilon$ for $0 < \varepsilon \ll 1$ sufficiently small is an admissible choice that ensures that $E_2 < E_1$ which is a contradiction. The difference between these two energies is
\begin{align*}
    E_1 - E_2 &=  \left|x_1 - \frac{1}{n+1} \right| -  \left|x - \frac{\ell+2}{n+1} \right| + \sum_{i=1}^{1+\ell} \left( \left|x_1 - \frac{i+1}{n+1} \right| - \left|x_1 - \frac{i}{n+1} \right| \right).
\end{align*}
By assumption, we have $x_1 < 1/(n+1)$, therefore
\begin{align*}
    \sum_{i=1}^{1+\ell} \left( \left|x_1 - \frac{i+1}{n+1} \right| - \left|x_1 - \frac{i}{n+1} \right| \right) &= \sum_{i=1}^{1+\ell} \left(  \frac{i+1}{n+1} - x_1\right) -  \left(\frac{i}{n+1} - x_1\right) \\
    &= \sum_{i=1}^{1+\ell}  \frac{i+1}{n+1} - \frac{i}{n+1}  =  \frac{1+\ell}{n+1} .
\end{align*}
Therefore, using the triangle inequality,
\begin{align*}
    E_1 - E_2 &= \frac{1+\ell}{n+1} + \left|x_1 - \frac{1}{n+1} \right| -  \left|x - \frac{\ell+2}{n+1} \right| \\
    &\geq \frac{1+\ell}{n+1} + \left|x_1 - \frac{1}{n+1} \right| -  \left|x - \frac{1}{n+1} \right| -\frac{1+\ell}{n+1} \\
    &= \left|x_1 - \frac{1}{n+1} \right| -  \left|x - \frac{1}{n+1} \right|
\end{align*} 
However, since $x_1 < 1/(n+1)$ and $x > x_1$ is an admissible choice, we see that we can make the expression strictly positive and $E_2 < E_1$. This concludes this case. If the new point being added is larger than all the existing points $x_{n} \leq x$, then
$$ E(x_1, \dots, x_n, x) = \sum_{i=1}^{n} \left| x_i - \frac{i}{n+1} \right| + \left|x - 1 \right|$$
which is now clearly minimized if we choose $x=1$ (which is guaranteed to not violate the condition $x_n \leq x$). It remains to deal with the case where the new point being added satisfies $x_k \leq x < x_{k+1}$. The energy is then given by
$$ E(x_1, \dots, x, \dots, x_n) = \sum_{i=1}^{k} \left|x_i - \frac{i}{n+1} \right| + \left| x - \frac{k+1}{n+1} \right| + \sum_{i=k+1}^{n} \left|x_i  - \frac{i+1}{n+1} \right|.$$
If
$$ x_k \leq \frac{k+1}{n+1} \leq x_{k+1},$$
then we are done and $x = (k+1)/(n+1)$ is the optimal choice. Suppose now that
$ (k+1)/(n+1) < x_k.$
If that is the case, then the best choice for $x$ subject to the constraints is $x = x_k$. The same computation as above shows that choosing $x = x_{k} - \varepsilon$ for $\varepsilon$ sufficiently small leads to a smaller energy. The case $ x_{k+1} \leq (k+1)/(n+1)$
is completely analogous. This shows that $x_{n+1}$ is always a rational number with denominator $n+1$. It also shows that if $x_{n+1} = (k+1)/(n+1)$, then
$$ x_k \leq \frac{k+1}{n+1} \leq x_{k+1}$$
which was the desired result.
\end{proof}

\subsection{Fast computation.}
Assume that the first $n$ points are ordered $0 \leq x_1 \leq x_2 \leq \dots \leq x_n \leq 1$.  We already know that the next point being added is going to be of the form $k/(n+1)$ for some $1 \leq k \leq n$. Supposing the points are sorted $x_1 \leq x_2 \leq \dots \leq x_n$, it suffices to start the computation from an extremal interval (either $0\leq x \leq x_1$ or $x_n \leq x \leq 1$), evaluate the sum, then update \emph{only} the sum-term that changes. Indeed, the next point added does not contribute to the new energy, we only need to track the contributions of previous points.
If $0 \leq x_{n+1} \leq x_1$, we know that $x_{n+1} = 1/(n+1)$ which means that the energy would then be
$$ \left|x_{n+1} - \frac{1}{n+1} \right| + \sum_{i=1}^n \left|x_i-\frac{i+1}{n+1}\right| = \sum_{i=1}^n \left|x_i-\frac{i+1}{n+1}\right|.$$ 
Optimizing for the best choice $x_1 \leq x_{n+1} \leq x_2$, the change in the energy sum is that the term $\left|x_1-2/(n+1)\right|$ is replaced by $\left|x_1-1/(n+1)\right|$. Only this term needs to be updated to obtain the new energy, this can be done in $\mathcal{O}(1)$ time. This process can be repeated for all the intervals: when moving $x$ to the $x_i\leq x \leq x_{i+1}$ interval from the $x_{i-1}\leq x \leq x_i$ interval, we replace $\left|x_i-(i+1)/(n+1)\right|$ by $\left|x_i-i/(n+1)\right|$.
The initial sum requires $\mathcal{O}(n)$ operations, while the $n$ different updates require $\mathcal{O}(1)$ other operations each. Overall, only a linear number of operations are required to compute the next element in the sequence.

\begin{algorithm}[t]
\caption{Fast computation of the sequence}
\begin{algorithmic}[1]
\Require Starting set $L$, of initial size $l$, target number of points $n$
\State Sort L
\For{$i=l$ to $i=n-1$} 
    \State Set $s=\sum_{k=1}^{i}|L[k]-\frac{k+1}{i+1}|$, the initial sum if the point is added in $[0,L[1]]$.
    \State Define $c=s$, current lowest sum value found.
    \State Define $p=0$, position where we found the current lowest sum $c$.
    \For{$j=1$ to $j=i$}
        \State We now consider $x \in [L[j];L[j+1]]$, with $L[i+1]=1$.
        \State Update the sum value $s=s-|L[j]-\frac{j+1}{i+1}|+|L[j]-\frac{j}{i+1}|$
        \If{$s<c$}
        \State Update the current best sums and associated position: $c=s$ and $p=j$.
        \EndIf
    \EndFor
    \State Add the new point $x=\frac{p+1}{i+1}$ to $L$: $L=L[:p+1]+[x]+L[p+1:]$
    \EndFor
    \State Return $L$
\end{algorithmic}
\end{algorithm}

\subsection{Heuristic derivation of the behavior} We now derive the heuristic
 $$ x_{n+1} \quad \sim \quad \arg\max_{0 \leq x \leq 1}  \quad \int_0^1 \sign\left[\Delta(y)\right] \cdot h_x(y) dy.$$
If we add a new point in $0 \leq x \leq 1$, then the change in energy can be written as
\begin{align*}
  \Delta E &= \sum_{x_i < x} \left( \left| x_i - \frac{i}{n+1} \right| -  \left| x_i - \frac{i}{n} \right| \right)  +  \sum_{x_i > x} \left( \left| x_i - \frac{i+1}{n+1} \right| -  \left| x_i - \frac{i}{n} \right| \right).
\end{align*} 
A useful first order asymptotic is
\begin{align*}
     \left| x_i - \frac{i}{n+1} \right| -  \left| x_i - \frac{i}{n} \right| &\sim \frac{1}{n}\sign\left(x_i - \frac{i}{n} \right) \cdot \frac{i}{n} \\
       \left| x_i - \frac{i+1}{n+1} \right| -  \left| x_i - \frac{i}{n} \right| &\sim \frac{1}{n}\sign\left(x_i - \frac{i}{n} \right) \cdot \frac{i - n}{n}.
\end{align*}
This suggests introducing the function $h_x:[0,1] \rightarrow [0,1]$ defined by
$$ h_x(y) = \begin{cases} y \qquad &\mbox{if}~y \leq x \\ 
y-1 \qquad &\mbox{if}~y > x. \end{cases}$$
This allows to write the change in energy, at least approximately, as
$$ \Delta E \sim  \frac{1}{n}\sum_{x_i < x}  \sign\left(x_i - \frac{i}{n} \right)  h_x(i/n) + \frac{1}{n}\sum_{x_i > x} \sign\left(x_i - \frac{i}{n} \right)  h_x(i/n). $$
Given $0 \leq x_1, \dots, x_n \leq 1$, we introduce the discrepancy function
$$ \Delta(x) = \frac{\# \left\{1 \leq i \leq n: x_i \leq x \right\}}{n} - x$$
and observe that $\Delta(x) > 0$ indicates that there are `too many' points in $[0,x]$ which shows that, locally, $x_i < i/n$, whereas $\Delta(x) < 0$ indicates that there are `not enough' points in $[0,x]$ which is indicative of $x_i > i/n$. Therefore, we expect
$$ \sign \left[\left(x_i - \frac{i}{n}\right)\right] = -\sign \left[ \Delta(x_i) \right].$$
Thus minimizing $\Delta E$ is roughly the same as maximizing
$$-\Delta E \sim \frac{1}{n}\sum_{x_i < x}  \sign \left[ \Delta(x_i) \right] h_x(i/n) + \frac{1}{n}\sum_{x_i > x} \sign \left[ \Delta(x_i) \right]  h_x(i/n)$$
and assuming that the points are approximately uniformly distributed, we may approximate the quantity of interest a little bit further and write
$$ x_{n+1} \quad \sim \quad \arg\max_{0 \leq x \leq 1} \int_0^1 \sign\left[\Delta(y)\right] \cdot h_x(y) dy.$$

\section{Proofs}

\subsection{Proof of Proposition 1.}
\begin{proof} 
    We know $x_{n+1}=k/(n+1)$ for some $k \in \{1,\ldots,n+1\}$ and its contribution to the energy is 0. Therefore
    \begin{align*}
    E(x_1, \dots, x_n) + E(x_1, \dots, x_{n+1}) &= \sum_{1 \leq i \leq n \atop x_i \leq x_{n+1}} \left| x_i - \frac{i}{n} \right| + \left| x_i - \frac{i}{n+1} \right|  \\
    &+ \sum_{1 \leq i \leq n \atop x_i > x_{n+1}}  \left| x_i - \frac{i}{n} \right| + \left| x_i - \frac{i+1}{n+1} \right|.
    \end{align*}
We have, for all $x \in \mathbb{R}$,
$$ \left| x - \frac{i}{n} \right| + \left| x - \frac{i}{n+1} \right| \geq \frac{i}{n(n+1)}$$
as well as
$$  \left| x_i - \frac{i}{n} \right| + \left| x_i - \frac{i+1}{n+1} \right| \geq \frac{n-i}{n(n+1)}.$$
Therefore
$$ E(x_1, \dots, x_n) + E(x_1, \dots, x_{n+1}) \geq \sum_{i=1}^n \min\left(\frac{i}{n(n+1)},\frac{n-i}{n(n+1)}\right).$$
A short computation then implies the result.
\end{proof}
There is an alternative argument that implies slightly larger constants when taking averages over more consecutive elements. Consider an individual point, say $0 < x< 1$. This point can be of the form $i/n$ and it could be of the form $j/(n+1)$ but it is unlikely to be both simultaneously. Using $\| \cdot \|$ to denote the distance to the nearest integer, we have that
$$ \min_{k \in \mathbb{N}} \left| x - \frac{k}{n} \right| = \frac{\| nx \|}{n}.$$
Then,
$$ \sum_{r = n}^{n+ \ell} \min_{k \in \mathbb{N}} \left| x - \frac{k}{r} \right| = \sum_{r = n}^{n+ \ell}  \frac{\| rx \|}{r} \geq \frac{1}{n+\ell}  \sum_{r = n}^{n+ \ell} \| rx \|.$$
The sum $\sum_{r = n}^{n+ \ell} \| rx \|$ is interesting in itself. It evaluates to 0 when $x=0,1$ and, by continuity, for values close to those two values. However, if $ \sum_{i=1}^{n} \left|x_i - i/n \right| \lesssim 1$, then `most' elements are far away from 0 and 1. It is not clear to us whether the sum has been studied before (the distribution of the summands has been considered by Don \cite{don}). This can be used to prove that averages over more terms have an average larger than $1/8$, however, we have not pursued this further.

\begin{center}
    \begin{figure}[h!]
    \begin{tikzpicture}
\node at (0,0) {\includegraphics[width=0.4\textwidth]{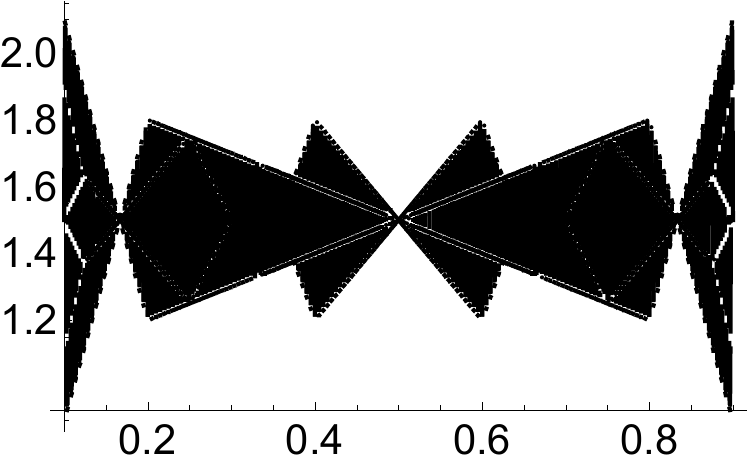}};
\node at (0.4, 1.2) {$\sum_{n=200}^{205} \|nx\|$};
\node at (6,0) {\includegraphics[width=0.4\textwidth]{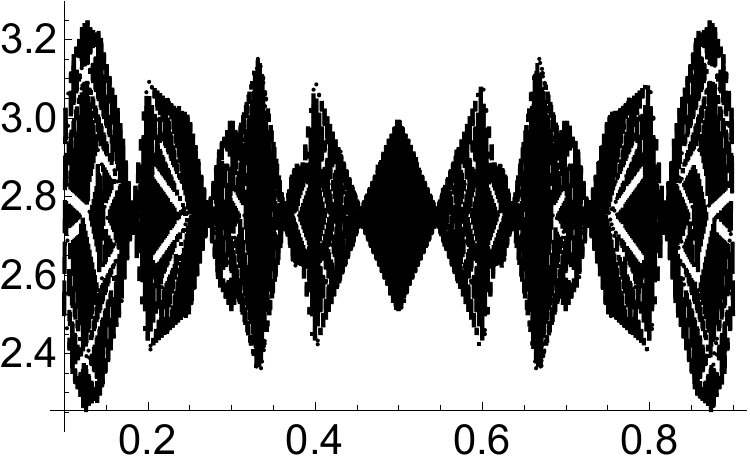}};
\node at (6.4, 1.4) {$\sum_{n=300}^{310} \|nx\|$};
    \end{tikzpicture}
       \end{figure}
\end{center}

\subsection{Proof of Theorem 2.}
\begin{proof}
The notion of energy is naturally related to Optimal Transport in the Wasserstein $W_1$ metric.
This quantity, in turn, has a simple formula in terms of the cumulative distribution function (see below).
   Since the original measure is discrete, there is an explicit formula for the $W_1-$distance: assuming without loss of generality that $0 \leq x_1 \leq \dots \leq x_n \leq 1$, we have
    $$  W_1\left(   \frac{1}{n} \sum_{k=1}^{n} \delta_{x_k}, 1_{[0,1]}(x) dx\right) = \sum_{i=1}^{n} \int_{(i-1)/n}^{i/n} \left| x_i - y \right| dy.$$
    Taking the lower bound
    $$ \int_{(i-1)/n}^{i/n} \left| x_i - y \right| dy \geq \frac{1}{n} \left|x_i - \frac{i-0.5}{n} \right| \geq \frac{1}{n} \left|x_i - \frac{i}{n} \right| - \frac{1}{n^2}$$
    and summing over all $n$ terms implies
    $$   W_1\left(   \frac{1}{n} \sum_{k=1}^{n} \delta_{x_k}, 1_{[0,1]}(x) dx\right) \geq \frac{E(x_1, \dots, x_n)}{n} - \frac{1}{n}.$$
    Simultaneously, we have the upper bound
    $$ \int_{(i-1)/n}^{i/n} \left| x_i - y \right| dy \leq \frac{1}{n}\left|x_i - \frac{i}{n} \right| + \frac{1}{n^2}$$
    from which we deduce
    $$   W_1\left(   \frac{1}{n} \sum_{k=1}^{n} \delta_{x_k}, 1_{[0,1]}(x) dx\right) \leq \frac{E(x_1, \dots, x_n)}{n} + \frac{1}{n}$$
 and thus
 $$ \left| n \cdot  W_1\left(   \frac{1}{n} \sum_{k=1}^{n} \delta_{x_k}, 1_{[0,1]}(x) dx\right) - E(x_1, \dots, x_n) \right| \leq 1.$$
  A standard result in optimal transport in one dimension is an exact formula for the $W_1-$cost (see \cite[Theorem 2.18]{villani}): if
 $ F(x) = \int_{-\infty}^{x} d\mu$
 is the CDF of a probability measure $\mu$ supported on $[0,1]$, then
 \begin{align*}
     W_1(\mu, dx) = \int_0^1 \left|F(x) - x \right| dx.
 \end{align*}
 Therefore
 $$  W_1\left(   \frac{1}{n} \sum_{k=1}^{n} \delta_{x_k}, 1_{[0,1]}(x) dx\right) = \int_0^1 \left| \frac{\# \left\{1 \leq i \leq n: x_i \leq x \right\}}{n} - x\right| dx.$$
 This quantity is also known as the $L^1-$star discrepancy. At this point, we invoke a powerful result of Hal\'asz \cite{halasz}. It says that if $P \subset [0,1]^2$ is a set of $\# \mathcal{P}$ points $\mathcal{P} = \left\{(x_1, y_1), \dots, (x_n, y_n) \right\}$, then
 $$ \int_0^1 \int_0^1 \left| \frac{\# \left\{1 \leq i \leq \# \mathcal{P}: x_i \leq x \wedge y_i \leq y\right\}}{\# \mathcal{P}} - xy \right| dx dy \geq c \sqrt{\log{(\# \mathcal{P})}}.$$
 It is well understood that the problem of understanding sequences in $[0,1]$ is `equivalent' to understanding the behavior of sets in $[0,1]^2$ by using a standard construction. The details are as follows. The goal is to show that for every one-dimensional sequence $(x_k)_{k=1}^{\infty}$ there are infinitely many values $n \in \mathbb{N}$ such that
$$ \int_0^1 \left| \frac{\# \left\{1 \leq i \leq n: x_i \leq x \right\}}{n} - x\right| dx \geq c \sqrt{\log{n}}.$$
Suppose this is false. Then, for every $\varepsilon > 0$ we can find a sequence $(x_k)_{k=1}^{\infty}$ such that the $L^1-$discrepancy is $\leq \varepsilon \sqrt{\log{n}}$ for all but finitely many terms.  
 Given a one-dimensional sequence of points $x_1, \dots, x_n$ in $[0,1]$, we can turn them into a two-dimensional set of points in $[0,1]^2$ by writing
 $$ \mathcal{P} = \left\{ (x_1, \tfrac{1}{n}), (x_2, \tfrac{2}{n}), \dots, (x_n, \tfrac{n}{n})\right\}.$$
 We assume that $n$ is extremely large, much larger than all the exceptional values and so large that the exceptional values contribute an arbitrarily small amount in the subsequent computation.
 The number of elements of $\mathcal{P}$ that can be found in $[0,x] \times [0,y]$ is really the number of elements of the first $y \#\mathcal{P}$ elements of the sequence that are in $[0,x]$, formally
\begin{align*}
    \frac{\# \left\{1 \leq i \leq \# \mathcal{P}: x_i \leq x \wedge y_i \leq y\right\}}{\# \mathcal{P}} &=  \frac{\# \left\{1 \leq i \leq \left\lfloor y \# \mathcal{P} \right\rfloor: x_i \leq x  \right\}}{\# \mathcal{P}} \\
    &=  \frac{\# \left\{1 \leq i \leq \left\lfloor y \# \mathcal{P} \right\rfloor: x_i \leq x  \right\}}{\left\lfloor y \# \mathcal{P} \right\rfloor} \frac{\left\lfloor y \# \mathcal{P} \right\rfloor}{\# \mathcal{P}} \\
    &= \frac{\# \left\{1 \leq i \leq \left\lfloor y \# \mathcal{P} \right\rfloor: x_i \leq x  \right\}}{\left\lfloor y \# \mathcal{P} \right\rfloor} \frac{ y \# \mathcal{P} + \mathcal{O}(1)}{\# \mathcal{P}} \\
    &=  \frac{\# \left\{1 \leq i \leq \left\lfloor y \# \mathcal{P} \right\rfloor: x_i \leq x  \right\}}{\left\lfloor y \# \mathcal{P} \right\rfloor} y + \mathcal{O}( \# P^{-1}).
\end{align*}  
Then the integral over $x$ satisfies
\begin{align*}
 I &= \int_0^1 \left|  \frac{\# \left\{1 \leq i \leq \# \mathcal{P}: x_i \leq x \wedge y_i \leq y\right\}}{\# \mathcal{P}} - xy \right| dx \\
 &= \int_0^1 \left| \frac{\# \left\{1 \leq i \leq \left\lfloor y \# \mathcal{P} \right\rfloor: x_i \leq x  \right\}}{\left\lfloor y \# \mathcal{P} \right\rfloor} y - xy \right| dx +  \mathcal{O}( \# \mathcal{P}^{-1}) \\
 &= y \int_0^1 \left| \frac{\# \left\{1 \leq i \leq \left\lfloor y \# \mathcal{P} \right\rfloor: x_i \leq x  \right\}}{\left\lfloor y \# \mathcal{P} \right\rfloor}  - x \right| dx +  \mathcal{O}( \# \mathcal{P}^{-1}).
\end{align*}
Now, for our infinite hypothetical sequence, we can integrate this over $y$ and get a bound that is smaller than $\leq \varepsilon \sqrt{\log{\# \mathcal{P}}}$ which contradicts Hal\'asz' result and the argument follows.
\end{proof}

The argument generalizes to other $W_p$ distances, we refer to Graham \cite{graham}, and it is philosophically connected to a result of Proinov \cite{proinov} (see also Kirk \cite{kirk}).

\subsection{Proof of Proposition 3.}
\begin{proof}
    Proposition 3 follows from the following explicit example.  Let $n = 4m$ be a multiple of 4 and set
    $$ x_k = \begin{cases}
        0 \qquad &\mbox{if}~k \leq m \\
       k/n\qquad &\mbox{if}~m < k \leq 3m \\
       1 \qquad &\mbox{if}~k > 3m.
    \end{cases}$$
    Some fairly standard computations show that 
    $$ E(x_1, \dots, x_n) =  \frac{n + o(n)}{8}$$
    and that the next point $x_{n+1}$ is being added in the middle, $x_{n+1} = (2m)/(4m+1)$ and that the energy increases by, asymptotically, $1/8$. Some more work shows that $x_{n+2} \sim 3/4$ (with energy comparable to $E(x_1, \dots, x_{n+1})$ and that $x_{n+3} \sim 1/4$ with the energy satisfying
    $$ E(x_1, \dots, x_{n+3}) \sim E(x_1, \dots, x_n) - \frac{1+o(1)}{12}.$$
\end{proof}

\vspace{-10pt}
\begin{center}
    \begin{figure}[h!]
\begin{tikzpicture}
\node at (0,0) {\includegraphics[width=0.4\textwidth]{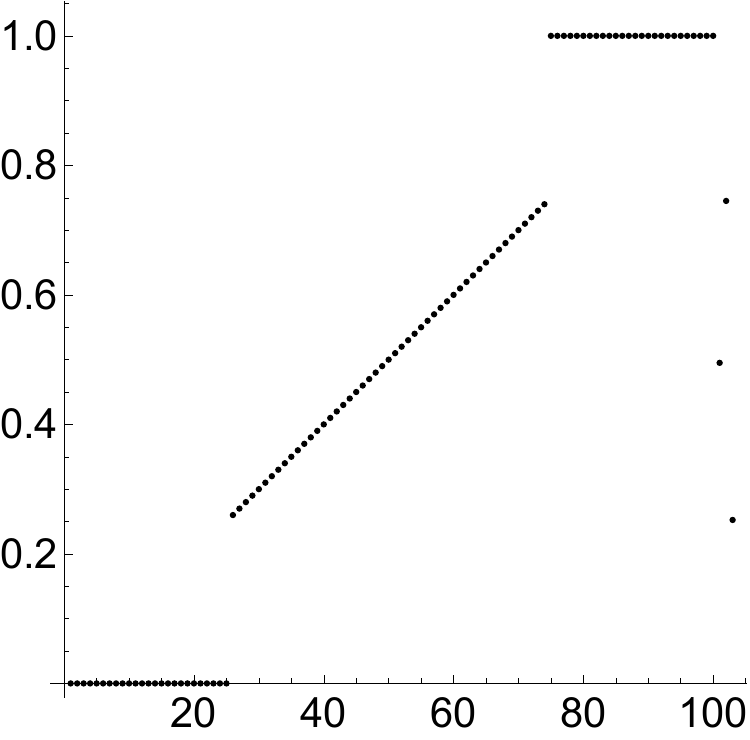}};
\node at (6,0) {\includegraphics[width=0.4\textwidth]{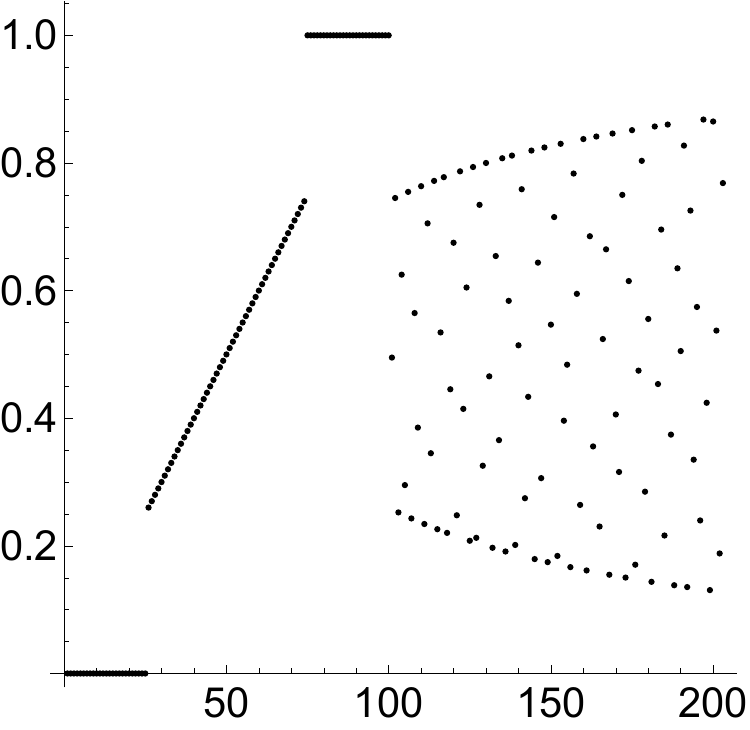}};
\end{tikzpicture}
\caption{Example from Proposition 3 when $m=25$.}
    \end{figure}
\end{center}

\section{The Continuous Limit}
\subsection{Deriving the functional.}
Suppose that $\mu$ is a probability measure on $[0,1]$ that admits a smooth density 
$ \mu = \phi(x) dx$
and that this density is bounded away from 0. We also introduce the cumulative distribution function 
$$ \Phi(x) = \cdf[\mu](x) = \int_0^x d\mu = \int_0^x \phi(y) dy.$$
If we were to sample $n \gg 1$ elements from that distribution, then the $i-$th largest element would, as $n \rightarrow \infty$, be located near $x = \Phi^{-1}(i/n)$. The theory of order statistics suggests the approximation
$$ \sum_{i=1}^{n} \left|x_i - \frac{i}{n}\right| \sim  \sum_{i=1}^{n} \left| \Phi^{-1}\left( \frac{i}{n} \right) - \frac{i}{n}\right|.$$
Taking a continuous limit, we arrive at
$$ \lim_{n \rightarrow \infty} \frac{1}{n}\sum_{i=1}^{n} \left| \Phi^{-1}\left( \frac{i}{n} \right) - \frac{i}{n}\right|  = \int_0^1 \left| \Phi^{-1}(t) - t \right|dt.$$
We use that $\Phi:[0,1] \rightarrow [0,1]$ is a smooth bijective map allowing us to change variables to obtain the alternative representation
$$ \int_0^1 \left| \Phi^{-1}(t) - t \right|dt = \int_0^1 \left| t - \Phi(t) \right| \cdot \phi(t) dt.$$

\subsection{The derivative of the functional.}
In the discrete setting, we are given a set of $n$ points $x_1, \dots, x_n$ and the corresponding
probability measure 
$$ \mu_1 = \frac{1}{n} \sum_{i=1}^{n} \delta_{x_i}.$$
The new point $x_{n+1}$ is being added so as to minimize the energy functional and leads to a new measure $\mu_2$ that can be rewritten as
$$ \mu_2 = \frac{1}{n+1} \left( \delta_{x_{n+1}} + \sum_{i=1}^{n} \delta_{x_i} \right) = \frac{n}{n+1} \mu_1 + \frac{\delta_{x_{n+1}}}{n+1}.$$
In the continuous limit, the natural scaling is therefore to replace $\mu$ by
$$ \mu \rightarrow \mu_{\varepsilon} = (1-\varepsilon) \mu + \varepsilon \delta_x$$
where $0<x<1$ is chosen so that the energy functional is as small as possible and we should think of $\varepsilon>0$ as infinitesimally small. In terms of order of quantifiers, we think of the optimal $x$ as a function of $x = x(\varepsilon)$ and let $\varepsilon \rightarrow 0$. We note that, outside of the new point $x$, the probability density function of $\mu_{\varepsilon}$ is given by $(1-\varepsilon)\phi(x)$.
The probability density function of $\mu_{\varepsilon}$ is not formally defined in $x$, however, it is defined in the usual weak sense as Dirac measure $\varepsilon \delta_x$. The change in the cumulative distribution function is also completely explicit and 
$$ \cdf[ (1-\varepsilon) \mu + \varepsilon \delta_x](y) =  (1-\varepsilon) \Phi(y) + \varepsilon\cdot 1_{y>x}.
$$

To analyze the change in the energy functional
$$ E(\mu) = \int_0^1 \left| t - \Phi(t) \right| \cdot \phi(t) dt,$$
we split the integral into the regions $\left\{t < x\right\}$, $\left\{t=x\right\}$ and $\left\{t >x \right\}$. The first is
$$(\mbox{I}) =  \int_0^x |t - (1-\varepsilon) \Phi(t)| \cdot (1-\varepsilon) \phi(t) dt.$$
Using that $x - \Phi(x)$ vanishes only a finite number of times and that $\varepsilon \rightarrow 0^+$, we may think of $A = t - \Phi(t)$ as being much bigger in absolute value, $|A| \gg |B|$, than $B = \varepsilon \Phi(t)$ for `most' values of $0 \leq t \leq 1$. Using the identity $|A + B| = |A| + \sign(AB) |B|$, valid for $|A| \geq |B| \geq 0$ the term can be rewritten to leading order as
\begin{align*}
   (\mbox{I}) &= (1-\varepsilon)\int_0^x |t - (1-\varepsilon) \Phi(t)|  \phi(t) dt \\
&= (1-\varepsilon)\int_0^x |t - \Phi(t) + \varepsilon \Phi(t)|  \phi(t) dt \\
&= (1-\varepsilon)\int_0^x |t - \Phi(t)| \phi (t) dt + (1-\varepsilon) \cdot \varepsilon \int_0^x \sign(t-\Phi(t))  \Phi(t)  \phi(t) dt,
\end{align*}
The second term comes from adding $\varepsilon$ of mass in point $x$. This has very little impact
on $|t-\Phi(t)|$ around $t=x$ (there is a jump of order $\varepsilon$) but it does impact $\phi(t)$ which, suitably interpreting the Dirac measures, leads to a contribution of
$$ (\mbox{II}) = \varepsilon |x - \Phi(x)| + \mathcal{O}(\varepsilon^2).$$
The third term behaves very much like the first and leads to
\begin{align*}
  (\mbox{III}) &=  \int_x^1 |t - (1-\varepsilon) \Phi(t) - \varepsilon| (1-\varepsilon) \phi(t) dt\\
 &=  (1-\varepsilon)\int_x^1 |t - \Phi(t) + \varepsilon (\Phi(t) - 1)  | \phi(t) dt \\
&= (1-\varepsilon)\int_x^1 |t - \Phi(t)| \phi (t) dt \\
&+ (1-\varepsilon) \cdot \varepsilon \int_x^1 \sign[( t - \Phi(t))  (\Phi(t)-1)]  (1-\Phi(t))  \phi(t) dt.
\end{align*}
Note that $\Phi(t) -1$ is always negative and thus
$$ \sign[( t - \Phi(t))  (\Phi(t)-1)] (1-\Phi(t))  = \sign[t - \Phi(t)] \cdot  (\Phi(t)-1).$$

Adding all three together, we get  
\begin{align*}
    (\mbox{I})  +  (\mbox{II}) +   (\mbox{III})  &= (1-\varepsilon)  \int_0^1 |t- \Phi(t)| \phi(t) dt +  \varepsilon \int_0^x \sign( t - \Phi(t))  \Phi(t)  \phi(t) dt \\
    &+  \varepsilon \int_x^1 \sign( t - \Phi(t))  ( \Phi(t)-1)  \phi(t) dt +   \varepsilon\cdot |x - \Phi(x)| + \mathcal{O}(\varepsilon^2).
\end{align*}
Note that the first term and $\varepsilon$ should be thought of as externally given: our goal is to choose $x$ to make the remaining terms as small as possible. Thus, formally, letting $\varepsilon \rightarrow 0$, we may define the function $g:[0,1] \rightarrow \mathbb{R}$ via
\begin{align*}
\frac{\partial E(\mu)}{\partial x} &=   \int_0^x \sign(\Phi(t) - t)  \Phi(t)  \phi(t) dt \\
    &+  \int_x^1 \sign( \Phi(t) - t)  ( \Phi(t) -1)  \phi(t) dt +   |x - \Phi(x)|.
\end{align*}
$\partial E(\mu)/\partial x$ describes the minuscule change of the overall energy under adding $\varepsilon$ mass to $x$ and rescaling the rest of the measure. The goal is therefore to find the value
$x$ that minimizes $\partial E(\mu)/\partial x$. Another change of variables $z = \Phi(t)$, gives $t = \Phi^{-1}(z)$ as well as the transformation
$$ \frac{dz}{dt} = \phi(t).$$
A short computation shows that this change of variables leads to
\begin{align*}
 \frac{\partial E(\mu)}{\partial x}&=   \int_0^{\Phi^{-1}(x)} \sign(z - \Phi^{-1}(z)) z ~ dz \\
    &+  \int_{\Phi^{-1}(x)}^1 \sign(z - \Phi^{-1}(z))  (z-1) ~  dz+   |\Phi^{-1}(x) - x|.
\end{align*}
 For simplicity of expression, we perform another substitution $\Phi(\alpha)=x$ and consider the function
\begin{align*}
h(\alpha) &=  \int_0^{\alpha} \sign(z - \Phi^{-1}(z)) z~  dz  \\
&+ \int_{\alpha}^1 \sign(z - \Phi^{-1}(z)) (z- 1) ~dz + |\alpha - \Phi(\alpha)|.
\end{align*}
In particular, if $h$ attains a global minimum in $\alpha^*$, then the optimal point to add mass will be in $x^* = \Phi(\alpha^*)$.

\subsection{Simplification I}
At this point, we show that the overall algebraic structure implies that minimizers can only be attained in fixed points of $\Phi(x)$.  

\begin{lemma}
  Let $\mu = \phi(x) dx$ be a probability measure on $[0,1]$ with $0 < c < \phi(x) < C < \infty$ such that $x = \Phi(x)$ has only finitely many solutions. The minimal value of
  \begin{align*}
h(\alpha) &=  \int_0^{\alpha} \sign(z - \Phi^{-1}(z)) z~  dz  \\
&+ \int_{\alpha}^1 \sign(z - \Phi^{-1}(z)) (z- 1) ~dz + |\alpha - \Phi(\alpha)|.
\end{align*}
is attained in a point satisfying $\alpha = \Phi(\alpha)$. 
\end{lemma}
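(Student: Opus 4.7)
The plan is to compute $h'(\alpha)$ directly and show that it has the same sign as $\Phi(\alpha)-\alpha$, which forces the minimum onto the fixed-point set.

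First, I would rewrite the sign weight in the integrands in a more convenient form. Since $\Phi$ is strictly increasing (as $\phi>c>0$), the inequality $z > \Phi^{-1}(z)$ is equivalent to $\Phi(z) > z$. Hence
\begin{equation*}
\sign\bigl(z-\Phi^{-1}(z)\bigr) \;=\; \sign\bigl(\Phi(z)-z\bigr),
\end{equation*}
and in particular this is a piecewise constant function of $z$ whose sign changes occur precisely at the (finitely many) fixed points of $\Phi$.

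Second, I would differentiate $h$ away from the fixed-point set, which is where the absolute value is smooth. Setting $s(\alpha):=\sign(\Phi(\alpha)-\alpha)$, the fundamental theorem of calculus yields
\begin{equation*}
\frac{d}{d\alpha}\int_0^{\alpha} s(z)\,z\,dz = s(\alpha)\,\alpha,
\qquad
\frac{d}{d\alpha}\int_{\alpha}^1 s(z)\,(z-1)\,dz = -s(\alpha)\,(\alpha-1),
\end{equation*}
and $\frac{d}{d\alpha}|\alpha-\Phi(\alpha)| = (1-\phi(\alpha))\sign(\alpha-\Phi(\alpha)) = -(1-\phi(\alpha))\,s(\alpha)$. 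Adding the three contributions telescopes beautifully:
\begin{equation*}
h'(\alpha) \;=\; s(\alpha)\bigl[\alpha-(\alpha-1)-(1-\phi(\alpha))\bigr] \;=\; \phi(\alpha)\,s(\alpha) \;=\; \phi(\alpha)\,\sign\bigl(\Phi(\alpha)-\alpha\bigr).
\end{equation*}

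Third, I would exploit this identity. Since $\phi>0$ throughout $[0,1]$, the sign of $h'$ coincides with the sign of $\Phi(\alpha)-\alpha$. By hypothesis the equation $\Phi(\alpha)=\alpha$ has only finitely many solutions $0=\alpha_0<\alpha_1<\cdots<\alpha_S=1$, and on each open subinterval $(\alpha_i,\alpha_{i+1})$ the quantity $\Phi(\alpha)-\alpha$ has constant sign. Therefore $h$ is strictly monotone on each such subinterval, and its restriction to $[\alpha_i,\alpha_{i+1}]$ attains its minimum at one of the endpoints. Since $h$ is continuous on $[0,1]$ and the fixed-point set is finite, the global minimum of $h$ must be attained at some $\alpha^*$ with $\Phi(\alpha^*)=\alpha^*$, which is the claim.

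The only delicate point is the differentiation of $|\alpha-\Phi(\alpha)|$ at fixed points, but this is irrelevant because the monotonicity argument only needs smoothness on the open subintervals between consecutive fixed points, and continuity of $h$ at the fixed points themselves is immediate since each term in the definition is continuous in $\alpha$. I do not anticipate any substantial obstacle; the proof is essentially a careful derivative computation whose miraculous cancellation encodes exactly the desired rigidity.
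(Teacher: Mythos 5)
Your proof is correct, and it reaches the conclusion by a genuinely different (more local) mechanism than the paper. The paper argues by contradiction and comparison: assuming the global minimizer $\beta$ is not a fixed point, it compares $h(\beta)$ with $h$ at the adjacent solution of $z=\Phi^{-1}(z)$ on the side dictated by $\sign(\beta-\Phi^{-1}(\beta))$; since the sign factor is constant on the intervening interval, the difference of the two integral terms evaluates exactly to the length of that interval, and together with the absolute-value term this forces $h$ to be strictly smaller at the fixed point, a contradiction. Your argument is the differentiated version of the same cancellation: away from the finitely many fixed points you obtain the closed-form identity $h'(\alpha)=\phi(\alpha)\,\sign(\Phi(\alpha)-\alpha)$ (the two integrals contribute $\sign(\alpha-\Phi^{-1}(\alpha))=\sign(\Phi(\alpha)-\alpha)$, exactly the computation the paper performs later for the functional without the absolute value, and $|\alpha-\Phi(\alpha)|$ contributes $-(1-\phi(\alpha))\,\sign(\Phi(\alpha)-\alpha)$), so $h$ is strictly monotone between consecutive fixed points and its minimum on each closed subinterval sits at an endpoint, hence at a fixed point. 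What your route buys: it localizes \emph{all} local minima of $h$ at fixed points, gives the full monotone structure of $h$, and works literally with the $|\alpha-\Phi(\alpha)|$ term of the statement (the paper's proof writes $|\beta-\Phi^{-1}(\beta)|$ instead, harmless since both vanish exactly at fixed points). What it costs: you differentiate $\Phi$, i.e.\ you use $\Phi'=\phi$, which is free under the paper's standing smoothness assumption; and even for merely bounded measurable $\phi$ your argument survives, since $h$ is then Lipschitz on each subinterval with $h'=\phi\cdot\sign(\Phi(\alpha)-\alpha)$ of constant nonzero sign almost everywhere, so strict monotonicity still follows, whereas the paper's comparison argument needs only continuity of $\Phi$ from the outset.
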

We note that since $\Phi(0) = 0$ and $\Phi(1) = 1$, such fixed points always exist.  In particular, if these are the only two fixed points of the cumulative distribution function, then the minimal value is attained at the boundary.
\begin{proof}
Suppose the minimizer is attained in $\beta$. We distinguish the two cases $\beta < \Phi^{-1}(\beta)$ and $\beta > \Phi^{-1}(\beta)$ and start with the first case. Suppose the smallest solution of $z = \Phi^{-1}(z)$ that is larger than $\beta$ is in $\gamma$. $\gamma$ exists because $1$ has that property. Then, because $\beta$ is a global minimizer, we have $h(\beta) \leq h(\gamma)$ and thus
 \begin{align*}
    0 \geq h(\beta) - h(\gamma) &=  -\int_{\beta}^{\gamma} \sign(z - \Phi^{-1}(z)) z~  dz  + \int_{\beta}^{\gamma} \sign(z - \Phi^{-1}(z)) (z- 1) dz \\
&+ |\beta - \Phi^{-1}(\beta)|  - |\gamma - \Phi^{-1}(\gamma)|\\
&=  \int_{\beta}^{\gamma}   z~  dz  -  \int_{\beta}^{\gamma} (z - 1)dz   + |\beta - \Phi^{-1}(\beta)| - |\gamma - \Phi^{-1}(\gamma)| \\
&= (\gamma - \beta) + |\beta - \Phi^{-1}(\beta)| - |\gamma - \Phi^{-1}(\gamma)|
 \end{align*}
This implies, recalling that $\gamma = \Phi^{-1}(\gamma)$,
$$ 0 = |\gamma - \Phi^{-1}(\gamma)|  \geq   |\beta - \Phi^{-1}(\beta)| + (\gamma - \beta)$$
which is a contradiction. Suppose now that the minimizer happens in $\beta$ and $\beta > \Phi^{-1}(\beta)$. Suppose the largest solution of $z = \Phi^{-1}(z)$ that is smaller than $\beta$ happens in $\alpha$. This $\alpha$ exists because $0$ has that property.  Then
 \begin{align*}
     0 \geq h(\beta) - h(\alpha) &=  \int_{\alpha}^{\beta} \sign(z - \Phi^{-1}(z)) z~  dz  - \int_{\alpha}^{\beta} \sign(z - \Phi^{-1}(z)) (z- 1) dz \\
&+ |\beta - \Phi^{-1}(\beta)| - |\alpha - \Phi^{-1}(\alpha)| \\
&= (\beta - \alpha) + |\beta - \Phi^{-1}(\beta)| - |\alpha - \Phi^{-1}(\alpha)|.
 \end{align*}
 Since $\alpha - \Phi^{-1}(\alpha) = 0$ and $\beta > \alpha$, this is a contradiction. 
\end{proof}
 
This simplifies the problem: the minimizer of the functional has to be a fixed point, a solution of $\alpha = \Phi(\alpha)$. We can thus drop the $|\alpha- \Phi(\alpha)|$ and consider the restricted optimization problem
$$ \min_{0 \leq \alpha \leq 1 \atop \alpha = \Phi(\alpha)} \int_0^{\alpha} \sign(z - \Phi^{-1}(z)) z~  dz  + \int_{\alpha}^1 \sign(z - \Phi^{-1}(z)) (z- 1) dz.$$

\subsection{Simplification II}
The result in the previous section shows that it is enough to consider fixed points $\Phi(\alpha) = \alpha$ and study the optimization problem
$$ \min_{0 \leq \alpha \leq 1 \atop \alpha = \Phi(\alpha)} \int_0^{\alpha} \sign(z - \Phi^{-1}(z)) z~  dz  + \int_{\alpha}^1 \sign(z - \Phi^{-1}(z)) (z- 1) dz.$$
This is a simpler functional but has a more complicated restriction, $\Phi(\alpha) = \alpha$. The purpose of this short section is to shows that this new functional has again a remarkable algebraic structure that ensures that any minimizer is automatically a fixed point. This allows us to go from restricted optimization of the simpler functional to unrestricted optimization of the simpler functional.

\begin{lemma}
If $\phi, \Phi$ are as above and if
\begin{align*}
    X &=  \min_{0 \leq \alpha \leq 1 \atop \alpha = \Phi(\alpha)} \int_0^{\alpha} \sign(z - \Phi^{-1}(z)) z~  dz  + \int_{\alpha}^1 \sign(z - \Phi^{-1}(z)) (z- 1) dz \\
    Y &=  \min_{0 \leq \alpha \leq 1} \int_0^{\alpha} \sign(z - \Phi^{-1}(z)) z~  dz  + \int_{\alpha}^1 \sign(z - \Phi^{-1}(z)) (z- 1) dz,
\end{align*}
  then we have $X=Y$. 
\end{lemma}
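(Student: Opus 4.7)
The inequality $Y \leq X$ is immediate, since $Y$ minimizes over a strictly larger set than $X$. The content of the lemma is the reverse inequality, which amounts to showing that every unconstrained minimizer $\alpha \in [0,1]$ of
\begin{equation*}
F(\alpha) := \int_0^{\alpha} \sign(z - \Phi^{-1}(z))\, z\, dz \;+\; \int_{\alpha}^{1} \sign(z - \Phi^{-1}(z))\, (z-1)\, dz
\end{equation*}
is automatically a fixed point $\Phi(\alpha) = \alpha$.

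The approach is to just differentiate $F$ in $\alpha$. By the fundamental theorem of calculus, the two boundary terms combine beautifully into
\begin{equation*}
F'(\alpha) \;=\; \sign(\alpha - \Phi^{-1}(\alpha))\cdot \alpha \;-\; \sign(\alpha - \Phi^{-1}(\alpha))\cdot(\alpha - 1) \;=\; \sign(\alpha - \Phi^{-1}(\alpha)).
\end{equation*}
The right-hand side is either $+1$ or $-1$ at every point where $\alpha \neq \Phi^{-1}(\alpha)$. Since $\Phi$ is strictly increasing, $\alpha = \Phi^{-1}(\alpha)$ is equivalent to $\Phi(\alpha) = \alpha$, and by hypothesis this equation has only finitely many solutions, which I would enumerate as $0 = \alpha_0 < \alpha_1 < \cdots < \alpha_N = 1$ (recalling that $0$ and $1$ are always fixed points). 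On each open interval $(\alpha_k, \alpha_{k+1})$ the sign of $\alpha - \Phi^{-1}(\alpha)$ is constant, so $F$ is affine of slope $\pm 1$ and therefore strictly monotone there. Hence the minimum of $F$ on $[\alpha_k, \alpha_{k+1}]$ is always attained at one of the endpoints $\alpha_k$ or $\alpha_{k+1}$. Taking a minimum over $k$, the global minimum of $F$ on $[0,1]$ is attained at some fixed point, which gives $X \leq Y$ and completes the argument.

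\textbf{Main obstacle.} The derivative computation is a one-liner, so the only real subtlety is justifying that $F$ is genuinely piecewise affine. This is exactly where the standing hypothesis that $\alpha = \Phi(\alpha)$ has only finitely many solutions is used: it ensures that $\sign(z-\Phi^{-1}(z))$ is locally constant off a finite set, so Leibniz' rule applies cleanly and $F$ really has slope $\pm 1$ between consecutive fixed points. If the fixed-point set were allowed to have positive measure, the sign would be ambiguous on that set and the piecewise-monotonicity step would need to be revisited; under the stated assumptions, however, no further difficulty arises.
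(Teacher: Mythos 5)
Your proposal is correct and follows essentially the same route as the paper: differentiate the functional, observe that the derivative equals $\sign(\alpha - \Phi^{-1}(\alpha)) = \pm 1$ away from the finitely many fixed points, and conclude that no interior non-fixed point can be a minimizer. Your additional remark that the functional is piecewise affine between consecutive fixed points is a slightly more explicit packaging of the same observation, but the key computation and conclusion coincide with the paper's argument.
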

\begin{proof}
    If $Y$ attains its minimum in $\alpha = 0$ or $\alpha =1$, then there is nothing to prove. If the minimum is attained inside $(0,1)$ and not in fixed point $\Phi(\alpha) = \alpha$ then, because there are only finitely many fixed points, we are a positive distance from the nearest fixed point. Abbreviating
    $$ g(\alpha) =  \int_0^{\alpha} \sign(z - \Phi^{-1}(z)) z~  dz  + \int_{\alpha}^1 \sign(z - \Phi^{-1}(z)) (z- 1) dz$$
and differentiating leads to
\begin{align*}
    g'(\alpha) &= \sign(\alpha - \Phi^{-1}(\alpha)) \alpha -  \sign(\alpha - \Phi^{-1}(\alpha)) (\alpha - 1)  = \sign(\alpha - \Phi^{-1}(\alpha)).
\end{align*} 
We see that this is $\pm 1$ which ensures that we have not yet found the minimum.
\end{proof}

We see that this is $\pm 1$ unless we are in a fixed point. Therefore, the numerical value of the minimum is the same as the numerical value of the unrestricted problem
$$ \min_{0 \leq \alpha \leq 1} \int_0^{\alpha} \sign(z - \Phi^{-1}(z)) z~  dz  + \int_{\alpha}^1 \sign(z - \Phi^{-1}(z)) (z- 1) dz$$
and we may work with the unrestricted problem instead. Naturally, when actually trying to solve the problem, the restriction $\alpha = \Phi(\alpha)$ is very useful and should not be omitted. In contrast, when trying to prove that the minimum is quite negative somewhere, it is easier to work without the restriction.

\subsection{Simplification III}
We will now prove the main structural inequality. A variant of it first appeared in \cite{stein2} where it had the following form.

\begin{lemma}[\cite{stein2}] Let $g:[0,1] \rightarrow \mathbb{R}$ be continuous and $g \not\equiv 0$. Then 
$$\max_{0 \leq z \leq 1} ~  \int_0^z g(x) x ~dx +  \int_z^1 g(x) (x-1) ~dx \geq  \frac{1}{16} \frac{1}{ \| g\|_{L^2}^2} \left(\max_{0 \leq x \leq 1} \left| \int_0^x g(y) dy\right|\right)^3.$$
\end{lemma}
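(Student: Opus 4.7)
The plan is to reduce the inequality to an elementary estimate on the antiderivative of $g$, then invoke Cauchy--Schwarz to turn it into a pointwise upper bound that can be integrated.

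Setting
$$F(z) := \int_0^z g(x)\, x\, dx + \int_z^1 g(x)(x-1)\, dx,$$
differentiation collapses everything: $F'(z) = g(z)z - g(z)(z-1) = g(z)$. Combined with $F(0) = \int_0^1 g(x)(x-1)\,dx = -\int_0^1 G(x)\,dx = -\overline{G}$ (integration by parts, using $G(0)=0$, where $G(x) = \int_0^x g(y)\,dy$), this yields the clean identity
$$F(z) = G(z) - \overline{G}, \qquad \overline{G} := \int_0^1 G(y)\,dy.$$
Hence the left-hand side of the inequality equals $\max_z G(z) - \overline{G}$, and the task becomes: show $\max G - \overline{G} \geq M^3/(16E)$, where $M = \max_x |G(x)|$ and $E = \|g\|_{L^2}^2$.

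The workhorse is the $\tfrac12$-H\"older modulus of continuity supplied by Cauchy--Schwarz: $|G(z) - G(w)| \leq \sqrt{|z-w|\cdot E}$. Since $G$ cannot climb (or descend) faster than this rate, and is also capped in absolute value by $M$, one gets a pointwise upper bound on $G$ that, when integrated, provides the desired upper bound on $\overline{G}$. I would split into two cases according to whether $G$ attains $+M$ or only $-M$.

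In Case A, $\max G = M$ is attained at some $x_\ast$. Then $G(z) \leq \min(M,\sqrt{zE})$ from $G(0)=0$ and the blanket ceiling. Noting that the crossover $a := M^2/E$ lies in $[0,1]$ (since $M \leq \sqrt{E}$), integration yields
$$\overline{G} \leq \int_0^a \sqrt{zE}\,dz + \int_a^1 M\,dz = M - \frac{M^3}{3E},$$
so $\max F \geq M - \overline{G} \geq M^3/(3E)$, more than enough. In Case B, $\min G = -M$ is attained at some $x_\dagger \in (0,1)$ while $\max G = m^+ < M$; one uses $G(z) \leq \min(m^+, -M + \sqrt{|z-x_\dagger|E})$ and integrates, splitting at the radius $c := (m^+ + M)^2/E$ where the two branches cross. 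In the most adverse sub-case (boundary $x_\dagger$, $c \leq 1$), a direct computation gives $\overline{G} \leq m^+ - (m^+ + M)^3/(3E)$, hence $\max F \geq (m^+ + M)^3/(3E) \geq M^3/(3E)$. If instead $c > 1$, then $m^+ + M > \sqrt{E}$ and an analogous one-line calculation yields $\max F \geq \sqrt{E}/3$, which comfortably exceeds $M^3/(16E)$ because $M \leq \sqrt{E}$.

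The main obstacle is the fiddly boundary analysis in Case B: the Cauchy--Schwarz ``cone'' centered at $x_\dagger$ may stick out past an endpoint of $[0,1]$, weakening the bound on $\overline{G}$. Interior placements of $x_\dagger$ in fact give the stronger estimate $\overline{G} \leq m^+ - 2(m^+ + M)^3/(3E)$, so the extremal case is always at the boundary. The generous constant $\tfrac{1}{16}$ absorbs the boundary loss without further machinery; a careful optimization would presumably replace it with something closer to $\tfrac{1}{3}$.
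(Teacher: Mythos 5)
The paper does not prove this lemma: it is cited from \cite{stein2}, and the authors instead prove a variant (Lemma~4) for functions $g:[0,1]\to\{-1,1\}$. So strictly speaking there is no ``paper's own proof'' to compare against. Your proof is, however, correct, and it is worth noting how closely it parallels the argument the paper \emph{does} give for Lemma~4: both begin by observing that the functional collapses to $F(z) = G(z) - \int_0^1 G$, so that the quantity to bound is $\max G - \overline{G}$, and then exploit a modulus-of-continuity estimate on $G$ together with the mean-zero property of $F$. In Lemma~4 the relevant modulus is the $1$-Lipschitz bound coming from $|g|=1$; in your setting it is the $\tfrac12$-H\"older bound $|G(z)-G(w)|\le\sqrt{|z-w|\,E}$ coming from Cauchy--Schwarz, which is exactly the right continuous analogue.

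I checked the two cases. In Case~A the crossover $a=M^2/E$ satisfies $a\le 1$ because $M\le\sqrt{E}$, and the integration gives $\overline G\le M - M^3/(3E)$, hence $\max F\ge M^3/(3E)$. In Case~B, with $x_\dagger$ the argmin, the pointwise bound $G(z)\le\min\bigl(m^+,\,-M+\sqrt{|z-x_\dagger|E}\bigr)$ integrates to $J(x_\dagger)$, and a direct computation (differentiating the ``spill-out'' piece in $x_\dagger$) shows $J$ is maximized as $x_\dagger$ moves to an endpoint, yielding $\overline G\le m^+-\tfrac13(m^++M)^3/E$ when $c=(m^++M)^2/E\le1$ and $\overline G\le -M+\tfrac23\sqrt{E}$ when $c>1$; both give $\max F\ge M^3/(3E)$. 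One small point: you should write $x_\dagger\in(0,1]$ rather than $(0,1)$, since $G(1)$ could equal $-M$; this is harmless because your boundary sub-case covers it, but the phrasing as written is slightly off. The net result is a constant $\tfrac13$ rather than $\tfrac1{16}$, which is consistent with the paper's remark that the characteristic-function example $g=1_{[0,\varepsilon]}$ (for which the ratio is $\tfrac12$) shows the inequality sharp only up to constants.
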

Taking $g$ to be the characteristic function on $[0, \varepsilon]$ shows that the inequality is sharp up to constants. The setting in \cite{stein2} required $g:[0,1] \rightarrow \mathbb{R}$ to be a function that could attain all possible real values and this allows for examples that are supported on very small intervals. Our setting is much nicer insofar as we can restrict ourselves to studying the same problem for functions $g:[0,1] \rightarrow \left\{-1,1\right\}$ which excludes these counterexamples and allows for results that have a more concrete interpretation in terms of the geometry of the cumulative distribution function.

\begin{lemma} Let $g:[0,1] \rightarrow \left\{-1,1\right\}$ with finitely many sign changes. Then
    $$ -X = \min_{0 \leq \alpha \leq 1} \int_0^{\alpha} g(z) z~  dz  + \int_{\alpha}^1 g(z) (z- 1) dz \leq 0.$$
    Using $\ell_1, \ell_2,  \dots$ to denote the length of the intervals on which $g$ is constant,  
    \begin{enumerate}
        \item the presence of a long interval ensures that $X$ is large
        $$ X \geq \max_{k \in \mathbb{N}} ~\frac{\ell_k^2}{4}.$$
        \item there cannot be too many intervals with lengths much larger than $X$
        $$ \sum_{\ell_k > X} (\ell_k - X)^2 \leq 2X$$
        \item if $g$ has $S$ sign changes, then
        $$  X \geq \frac{1}{4S}.$$
    \end{enumerate}
\end{lemma}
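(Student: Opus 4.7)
The plan is to work with the primitive
\[
F(\alpha) := \int_0^\alpha g(z) z\, dz + \int_\alpha^1 g(z)(z-1)\, dz,
\]
so that $X = -\min_\alpha F(\alpha)$. Since $g \in \{-1,+1\}$, differentiation gives $F'(\alpha) = g(\alpha)$; hence $F$ is piecewise linear with slopes $\pm 1$ and breakpoints at the sign changes of $g$. Applying Fubini to both terms yields the global identity
\[
\int_0^1 F(\alpha)\, d\alpha = \int_0^1 g(z)\, z\,[(1-z)+(z-1)]\, dz = 0,
\]
which already forces $F$ to be non-positive somewhere, so $X \geq 0$ and $-X \leq 0$.

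The heart of the argument is a single per-interval estimate. On a constant-sign interval $J_k = [a_k, b_k]$ of length $\ell_k$, $F$ is linear with slope $\pm 1$. If $g \equiv +1$ on $J_k$, then $F(\alpha) = F(a_k) + (\alpha - a_k)$, so
\[
\int_{J_k}(F + X)\, d\alpha = (F(a_k) + X)\,\ell_k + \tfrac{1}{2}\ell_k^2 \;\geq\; \tfrac{1}{2}\ell_k^2,
\]
using only the defining bound $F(a_k) \geq -X$. If $g \equiv -1$ on $J_k$, the same calculation with $F(\alpha) = F(b_k) + (b_k - \alpha)$ gives $\int_{J_k}(F+X) \geq \tfrac{1}{2}\ell_k^2$ via the right-endpoint inequality $F(b_k) \geq -X$. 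Either way, each constant-sign interval contributes at least $\tfrac{1}{2}\ell_k^2$ to the integral of $F+X$.

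All three claims drop out of this estimate together with $\int_0^1(F+X)\, d\alpha = X$ (which follows from $\int F = 0$ and $F+X \geq 0$). For (1), a single interval gives $\tfrac{1}{2}\ell_k^2 \leq \int_{J_k}(F+X) \leq \int_0^1(F+X) = X$, hence $X \geq \ell_k^2/2 \geq \ell_k^2/4$ for every $k$. For (2), summing the per-interval estimate over the partition $\{J_k\}$ of $[0,1]$ produces the master inequality $\sum_k \ell_k^2 \leq 2X$; since $(\ell_k - X)^2 \leq \ell_k^2$ whenever $\ell_k > X > 0$, we obtain $\sum_{\ell_k > X}(\ell_k - X)^2 \leq 2X$. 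For (3), the $S$ sign changes partition $[0,1]$ into $S+1$ intervals with $\sum_k \ell_k = 1$, and Cauchy--Schwarz gives $1 = (\sum_k \ell_k)^2 \leq (S+1)\sum_k \ell_k^2 \leq 2(S+1)X$, so $X \geq 1/(2(S+1)) \geq 1/(4S)$ for $S \geq 1$.

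The only delicacy is the asymmetry in the per-interval bound: one must apply the inequality $F \geq -X$ at the \emph{left} endpoint of each $+1$ interval but at the \emph{right} endpoint of each $-1$ interval, since this is precisely where the triangle of area $\tfrac{1}{2}\ell_k^2$ under the graph of $F+X$ is anchored. Once this one observation is in place, the lemma reduces to a global integral identity and a single use of Cauchy--Schwarz; no finer analysis of peaks, valleys, or interval adjacency is required, and Lemma 5 from \cite{stein2} is not invoked.
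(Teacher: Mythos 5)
Your proof is correct and in fact slightly strengthens the lemma. Like the paper, you identify $F$ (the paper's $h = G - \int_0^1 G$) as a piecewise-linear function with slopes $\pm 1$ and mean zero, but your key move is cleaner: rather than estimating $\max_{I_k} h \geq \ell_k - X$ and integrating a triangle of area $\tfrac{1}{2}(\ell_k - X)^2$ above $0$, you observe that $F + X \geq 0$ everywhere and anchor a trapezoid at the endpoint of $J_k$ where $F$ is smallest (left endpoint if $g\equiv +1$, right if $g\equiv -1$), giving the per-interval bound $\int_{J_k}(F+X) \geq \tfrac{1}{2}\ell_k^2$ with no condition on $\ell_k$ versus $X$. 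Summing and using $\int_0^1(F+X) = X$ yields the master inequality $\sum_k \ell_k^2 \leq 2X$, which immediately gives $X \geq \ell_k^2/2$ (sharper than the stated $\ell_k^2/4$, and tight for $g\equiv 1$), gives (2) via $(\ell_k - X)^2 \leq \ell_k^2$ for $\ell_k > X > 0$, and gives (3) via one application of Cauchy--Schwarz with $\sum \ell_k = 1$ over $S+1$ intervals, yielding $X \geq 1/(2(S+1)) \geq 1/(4S)$. This bypasses the paper's case split on $X \lessgtr 1/4$, the auxiliary numerical inequality $\sqrt{2X}+X \leq 2\sqrt{X}$, and the counting argument in part (3) comparing short and long intervals. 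The one thing you rely on implicitly is $X > 0$ in part (2); this holds because $F$ has mean zero and is nonconstant (its derivative is $\pm 1$ a.e.), which you have already established. The trade-off is that the paper's intermediate inequality $(\ell_k - X)^2 \leq 2X$ is a slightly different piece of information, but your master inequality is both simpler and at least as strong for every claim in the lemma.
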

\begin{proof} We first prove (1), then give an independent argument for (2) which will imply (3). We abbreviate
$$ h(\alpha) =  \int_0^{\alpha} g(z) z~  dz  + \int_{\alpha}^1 g(z) (z- 1) dz.$$
The same computation as above shows that whenever $h'(\alpha)$ exists, it satisfies $h'(\alpha) \in \left\{-1, +1\right\}$. We first rewrite the function as
$$ h(\alpha) = \int_0^1 g(z) z~dz - \int_{\alpha}^1 g(z) dz.$$
Introducing the anti-derivative 
$$ G(z) = \int_0^z g(y) dy$$
and integrating by parts, we see that
\begin{align*}
     \int_0^1 g(z) z ~dz &= G(z) z \big|_{0}^1 - \int_0^1 G(z) dz = G(1) - \int_0^1 G(z) dz.
\end{align*}
Combining this with $\int_{\alpha}^1 g(z) dz = G(1) - G(\alpha)$, we arrive that
$$ h(\alpha) = G(\alpha) - \int_0^1 G(x) dx.$$
This representation shows that $h$ has mean value 0 since
$$ \int_0^1 h(\alpha) d\alpha = \int_0^1 G(\alpha) d\alpha -  \int_0^1 G(x) dx = 0.$$
A non-constant function assumes values both above and below the mean value and $h$ is not constant because $h'(\alpha) \in \left\{-1,1\right\}$ for all but finitely many values. Therefore
  $$ -X = \min_{0 \leq \alpha \leq 1} \int_0^{\alpha} g(z) z~  dz  + \int_{\alpha}^1 g(z) (z- 1) dz$$
satisfies $X >0$. It remains to make this step quantitative.\\

Suppose that $I_k \subset [0,1]$ is an interval on which $g$ is constant and suppose that interval has length $|I_k| = \ell_k$. Then 
$$ \max_{a \in I_k} h(a) - \min_{a \in I_k} h(a) = \ell_k.$$
Therefore
$$ \max_{a \in I_k} h(a) \geq \ell_k - X.$$
Provided $\ell_k \geq X$, we can deduce that, since $h$ is $1-$Lipschitz, one has
$$ \int_{0}^{1} \max\left\{ h(x), 0 \right\} dx \geq \frac{1}{2}(\ell_k - X)^2.$$
Simultaneously, we have the trivial estimate
$$ \int_{0}^{1} \min\left\{ h(x), 0 \right\} dx \geq -X$$
and the fact that $h$ has mean value 0 then implies that
$$ \frac{1}{2}(\ell_k -X)^2 - X \leq 0 \qquad \mbox{meaning that} \qquad (\ell_k -X)^2 \leq 2X.$$
If $X \geq 1/4$, then we are done since $\ell_k \leq 1$. It remains to deal with the case $X \geq 1/4$.
Assuming $\ell_k \geq X$, this implies, using $\sqrt{2X} + X \leq 2 \sqrt{X}$ for all $0 \leq X \leq 0.35$ that
$$ \ell_k \leq \sqrt{2X} + X \leq 2\sqrt{X} \qquad \mbox{and thus} \qquad X \geq \frac{\ell_k^2}{4}.$$
Therefore we either have $\ell_k \leq X$, in which case we certainly also have $\ell_k^2/4 \leq X$ because $\ell_k \leq 1$, or we have $\ell_k \geq X$ in which case we have $X \geq \ell_k^2/4$. This means we have $X \geq \ell_k^2/4$ in either case. This proves (1).\\

As for the second statement, we argue as above and note that if $I_k \subset [0,1]$ is an interval on which $g$ is constant and if it has length $|I_k| = \ell_k$, then  
$$ \max_{a \in I_k} h(a) \geq \ell_k - X.$$
If $\ell_k > X$, then 
$$ \int_{I_k} \max\left\{h(x), 0\right\} dx \geq \frac{(\ell_k - X)^2}{2}.$$
Now since,
\begin{align*}
    0 &= \int_0^1 h(x) dx = \int_0^1 \min\left\{h(x),0\right\}  dx + \int_0^1 \max\left\{h(x),0\right\}  dx \\
    &\geq -X + \int_0^1 \max\left\{h(x),0\right\}  dx,
\end{align*}
we may deduce that
$$ \sum_{\ell_k > X} \frac{(\ell_k - X)^2}{2} \leq \int_0^1 \max\left\{h(x),0\right\}  dx \leq X.$$
Suppose now there are only $S - 1$ sign changes. This leads to $S$ intervals. Maybe only very few, say $F \leq S$ of them, have length larger than $X$. Then the $S-F$ intervals have length $\leq X$ and we may deduce
$$ \sum_{\ell_k > X} \ell_k \geq 1 - (S-F)X.$$
Note that the $(S-F)X$ short interval cannot exceed length 1 and thus this lower bound is nonnegative: $1 - (S-F)X \geq 0$. The Cauchy-Schwarz inequality implies
\begin{align*}
     0\leq \left(\frac{1}{F} \sum_{\ell_k > X} \ell_k\right)  -  X &= \frac{1}{F} \sum_{\ell_k > X} (\ell_k - X) \\
    &\leq \frac{1}{F} \left(\sum_{\ell_k > X} (\ell_k - X)^2 \right)^{1/2} \sqrt{F}
\end{align*}
Therefore
$$ 2X \geq \sum_{\ell_k > X} (\ell_k - X)^2 \geq \frac{1}{F} \left[\sum_{\ell_k > X} (\ell_k  -  X) \right]^2.$$
We have
$$ \sum_{\ell_k > X} (\ell_k - X) \geq 1 - SX.$$
If $X \leq 1/(2S)$, then 
$$ 2X \geq \frac{1}{F} \left[\sum_{\ell_k > X} (\ell_k  -  X) \right]^2 \geq \frac{1}{4F} \geq \frac{1}{4S}.$$
This implies the desired result.
\end{proof}

\section{A related sequence}

\subsection{A periodic discrepancy energy and numerics}
We point out a related sequence that seems to be roughly equally good at carrying out the desired task of updating an exisiting set of points in $[0,1]$ to approximate the uniform distribution as quickly and uniformly as possible. It is remarkable that this sequence seems to arise independently in several different papers in the literature.
The method is defined as follows: introduce the $1-$periodic function $p:\mathbb{R} \rightarrow \mathbb{R}$ which satisfies
$$ p(x) =  x^2 - x + \frac{1}{6} \qquad \mbox{for}~0 \leq x \leq 1$$
and is then extended periodically. This is the second Bernoulli polynomial. Given $0 \leq x_1, \dots, x_n \leq 1$, the new point is defined via
$$ x_{n+1} =  \arg\min_{0 \leq x \leq 1} \sum_{k=1}^{n} p(x - x_k).$$
This type of construction arises in at least four different ways (which, ultimately, are all equivalent in one form or another)
\begin{itemize}
    \item as a way to greedily minimize Zinterhof's diaphony \cite{zinterhof} which is originally defined as a Fourier multiplier (see also Lev \cite{lev})
    \item in a 2014 paper of Hinrichs and Oettershagen \cite{hinrichs} where it arises as the reproducing kernel for functions in the Sobolev space $H^1(\mathbb{T}^2)$. They show that if $\left\{x_1, \dots, x_n\right\} \subset [0,1]$ is given, then the periodic $L^2-$discrepancy can be written as 
    $$ L_{2}^{\mbox{\tiny per}} = - \frac{1}{3} + \frac{1}{n^2} \sum_{i,j=1}^{n} \left(\frac{1}{3} + p(x_i - x_j)\right).$$
    \item in a 2020 paper of Brown and Steinerberger \cite{brown2} where it arises as a linearization of the Wasserstein distance $W_2$ or, alternatively, from the identity
    $$ 2 \pi^2 \sum_{0 \neq h \in \mathbb{Z}} \frac{e^{2\pi i h x}}{h^2} = \left\{x\right\}^2 - \left\{x\right\} + \frac{1}{6}$$
    \item and in a completely different way, from a regularity measure proposed by Wagner \cite{wagner}, see below for a discussion. 
\end{itemize}

We were motivated by a Lemma of Wagner \cite{wagner}: consider the $1-$periodic function
$ h(x) = 1/2 - \left\{ x \right\}$
and introduce the corresponding function
$$ \Delta(x) = \sum_{i=1}^{n} h(x - x_i).$$

\begin{lemma}[Wagner \cite{wagner}] The function $\Delta$ is piecewise linear with slope $-N$ and jumps of height 1 at each point. It has mean value 0
$$ \int_0^1 \Delta(x) = 0.$$
    It is connected to the (extreme) discrepancy $D_N$ via
    $$ \frac{1}{2} D_N \leq \max_{0 \leq x \leq 1} |\Delta(x)| \leq D_N.$$
\end{lemma}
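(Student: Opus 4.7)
The plan is to handle (1) and (2) as direct computations, then reduce (3) to a clean identity relating $\Delta$ to the local (star) discrepancy function and conclude via a mean-zero balancing argument.

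For (1), I start from the observation that $h(y) = 1/2 - \{y\}$ is affine with slope $-1$ on each interval $(k, k+1)$ and jumps by $+1$ at every integer. Since $\Delta(x) = \sum_{i=1}^{N} h(x-x_i)$ is a sum of $N$ shifted copies, it has slope $-N$ away from the points $\{x_1, \ldots, x_N\}$ taken mod $1$, and at each such point exactly one summand contributes a $+1$ jump (the degenerate case where points coincide just stacks jumps). For (2), I note that $\int_0^1 h(y)\,dy = \int_0^1 (1/2 - y)\,dy = 0$, and by $1$-periodicity of $h$ each translated integral $\int_0^1 h(x - x_i)\,dx$ also vanishes, so summing gives $\int_0^1 \Delta\,dx = 0$.

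For (3) the key step is to derive the explicit identity $\Delta(x) = F(x) - \overline{F}$, where
$$F(x) = \#\{i : x_i \leq x\} - Nx$$
is the unnormalized local discrepancy function and $\overline{F} = \int_0^1 F(t)\,dt$. Writing $\Delta(x) = N/2 - \sum_i \{x - x_i\}$ and splitting the sum based on whether $x_i \leq x$ or $x_i > x$ (which contribute $x - x_i$ and $x - x_i + 1$, respectively), collecting terms yields
$$\Delta(x) = F(x) + \Bigl(\sum_i x_i - N/2\Bigr),$$
and the additive constant is forced to equal $-\overline{F}$ by the mean-zero statement (2).

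From here the bounds are immediate once one identifies the extreme discrepancy as $D_N = \max F - \min F$, since $F(b) - F(a)$ is exactly the count deviation on $(a,b]$. This gives $\max \Delta - \min \Delta = D_N$, and because $\Delta$ has mean zero and is not identically zero, it takes both signs; hence $\max|\Delta| \leq \max \Delta - \min \Delta = D_N$ and $\max|\Delta| \geq (\max\Delta - \min \Delta)/2 = D_N/2$. The only genuine obstacle is bookkeeping: verifying that the fractional-part splitting leaves a purely $x$-independent remainder, and lining up the normalization convention so that $D_N$ matches $\max F - \min F$ rather than a version divided by $N$.
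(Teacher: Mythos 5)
The paper does not actually prove this lemma; it is stated as ``Lemma [Wagner]'' with a pointer to \cite{wagner}, and no in-paper argument is given. So there is nothing internal to compare against, and your proof stands on its own.

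Your argument is correct, and it is essentially the standard derivation. The computation of the shift identity is right: with $A(x)=\#\{i:x_i\le x\}$, splitting the sum $\sum_i\{x-x_i\}$ into $x_i\le x$ (contributing $x-x_i$) and $x_i>x$ (contributing $x-x_i+1$) gives $\Delta(x)=A(x)-Nx+\bigl(\sum_i x_i - N/2\bigr)=F(x)-\overline F$, and the additive constant is indeed forced by the mean-zero statement (one can also verify directly $\int_0^1 F = N/2 - \sum_i x_i$). From there the range of $\Delta$ equals the range of $F$, which is the unnormalized extreme discrepancy, and the mean-zero condition (together with $\Delta\not\equiv 0$, which holds because $\Delta$ has genuine jumps of $+1$) gives $\max\Delta>0>\min\Delta$, from which $\tfrac12(\max\Delta-\min\Delta)\le\max|\Delta|\le\max\Delta-\min\Delta$ is immediate. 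Two bookkeeping remarks. First, the identification $D_N=\max F-\min F$ tacitly uses $F(0)=F(1)=0$ so that the supremum over intervals really is the full range of $F$; this is worth stating since it is the one place where the ``count deviation on $(a,b]$'' interpretation actually gets upgraded to the range. Second, the $D_N$ appearing in Wagner's lemma is necessarily the \emph{unnormalized} extreme discrepancy (otherwise the inequalities fail dimensionally, since $\Delta$ has jumps of size $1$), which is inconsistent with the normalized definition of $D_N$ given in \S 5.2 of this paper; this is a notational wrinkle in the paper, not a gap in your argument, but you were right to flag the normalization convention as the only real danger point.
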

$\Delta(x)$ is also shown to have a number of other structural properties (that will not be of importance here, we refer to \cite{wagner}). Motivated by this nice equivalence, one may be tempted to add the next point $x_{n+1}$ so
as to minimize the expression
$$ \int_0^1 \Delta_{n+1}(x)^2~ dx \rightarrow \min.$$
An explicit computation shows that 
\begin{align*}
    \int_0^1 \left( \sum_{i=1}^{n+1} h(x - x_i) \right)^2 = \int_0^1 \Delta_n(x)^2 + \int_0^1 h(x-x_{n+1})^2 dx + 2\int_0^1 \Delta_n(x) h(x-x_{n+1})
\end{align*}
implying that one should set the next point $x_{n+1}$ so that
 $$ \int_0^1 \Delta_n(x) h(x-x_{n+1}) dx \rightarrow \min.$$
This can be further simplified. One has
\begin{align*}
     \int_0^1 \Delta_n(x) h(x-x_{n+1}) dx &=  \int_0^1 \sum_{k=1}^{n} \left(\frac12 - \left\{x - x_k \right\} \right) \left(\frac12 - \left\{x - x_{n+1} \right\} \right) dx \\
     &=  \sum_{k=1}^{n} \int_0^1  \left(\frac12 - \left\{x - x_k \right\} \right) \left(\frac12 - \left\{x - x_{n+1} \right\} \right) dx.
\end{align*}
  A short computation shows
$$ \int_0^1  \left(\frac12 - \left\{x - x_k \right\} \right) \left(\frac12 - \left\{x - x_{n+1} \right\} \right) dx = p(x_k - x_{n+1})$$
where $p(x)$ is the second Bernoulli polynomial. This is exactly the sequence above.

\subsection{Numerical results}
We return to the special case of trying to find a sequence of points in $[0,1]$ such that
$\left\{x_1, \dots, x_N\right\}$ is as uniformly distributed as possible \textit{uniformly} in $N$. This is a very difficult problem in its own right. The two metrics most commonly used in the area are the star-discrepancy $D_N^*$
$$ D_N^* = \max_{0 \leq x \leq 1} \left| \frac{\# \left\{1 \leq i \leq N: x_i \leq x \right\}}{N} - x \right|$$
and the periodic discrepancy (also known as extreme discrepancy in one dimension)
$$ D_N = \max_{J \subset [0,1] \atop J~{\tiny \mbox{interval}}} \left| \frac{\# \left\{1 \leq i \leq N: x_i \in J \right\}}{N} - |J| \right|.$$
These two quantities are related and satisfy $D_N^* \leq D_N \leq 2D_N^*$. 
It is known that the best uniform estimate one can hope for asymptotically is \cite{schm}
$$ D_N^* \leq c \frac{\log{N}}{N}$$
where the constant $c$ can be chosen as small as $\sim 0.22$ \cite{faure, ost} 
and cannot be smaller than $\sim 0.065$ \cite{larcher, puch}. As is customary in such situations, the upper bound is probably closer to the truth. This scaling suggests that the appropriate scale-invariant quantities are $N D_N^*/\log{N}$ and $N D_N/\log{N}$.
Classic examples that are known to attain a very good rate are the van der Corput sequence~\cite{vdc} and the Kronecker sequence with golden ratio. We refer to the Kronecker sequence as the Fibonacci sequence, it is given by $\left(\{\varphi n\}\right)_{n \in \mathbb{N}}$, where $\{\varphi n\}$ is the fractional part of golden ratio times $n$. A greedy contender is the much more recent Kritzinger sequence~\cite{kritzinger} for which one of the authors \cite{clement} has shown numerically that it performs exceedingly well. We start by comparing with the `periodic energy' construction introduced in \S 5.1 with initial values $\left\{1/3, 1/2\right\}$. The results are shown in Figure~\ref{fig:perio}. We see that the sequence from $\S 5.1$ performs well and is, within a small constant, comparable to the leading constructions. 
\vspace{-20pt}
\begin{figure}[h]
    \centering
    \includegraphics[width=0.49\linewidth]{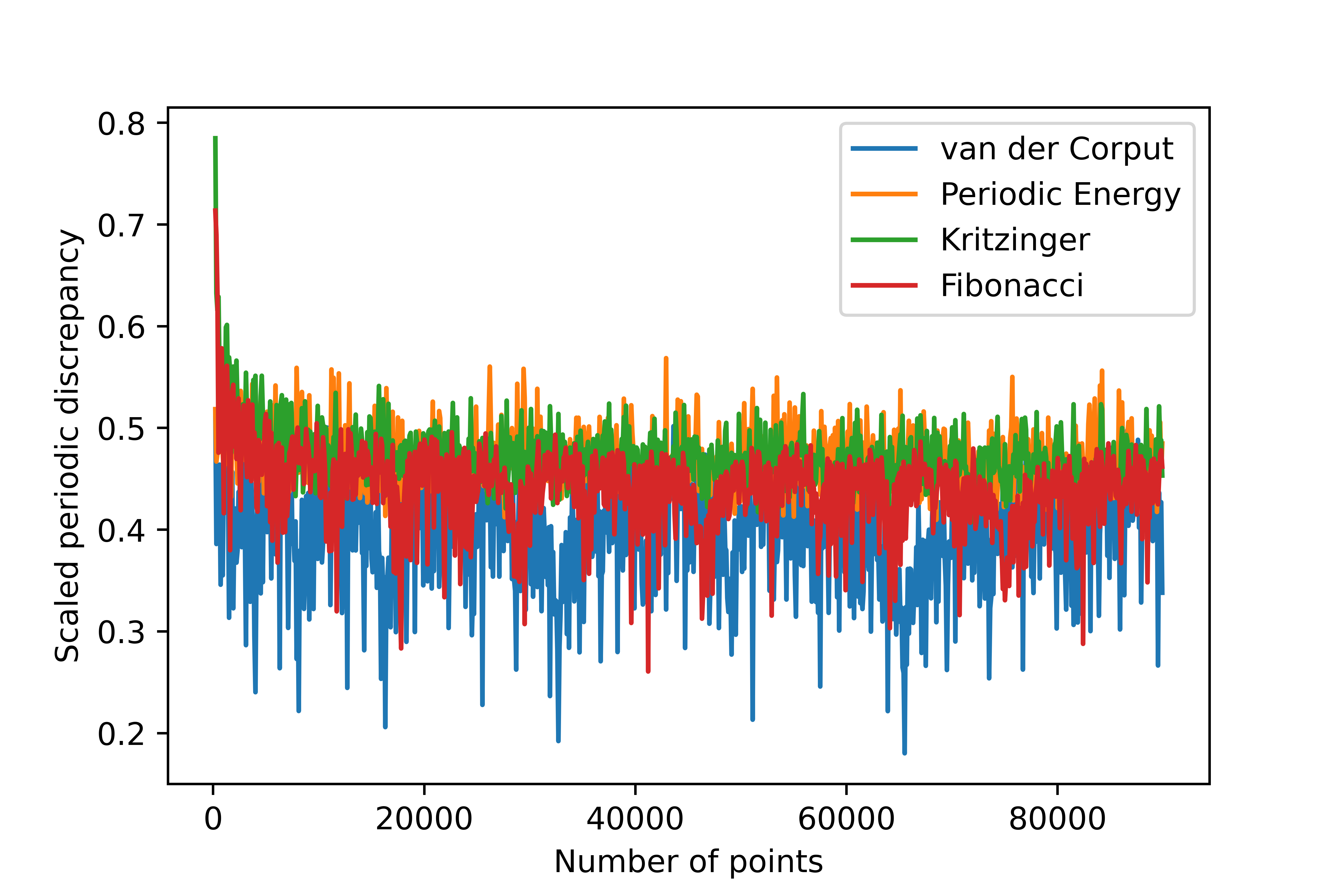}
    \hfill
    \includegraphics[width=0.49\linewidth]{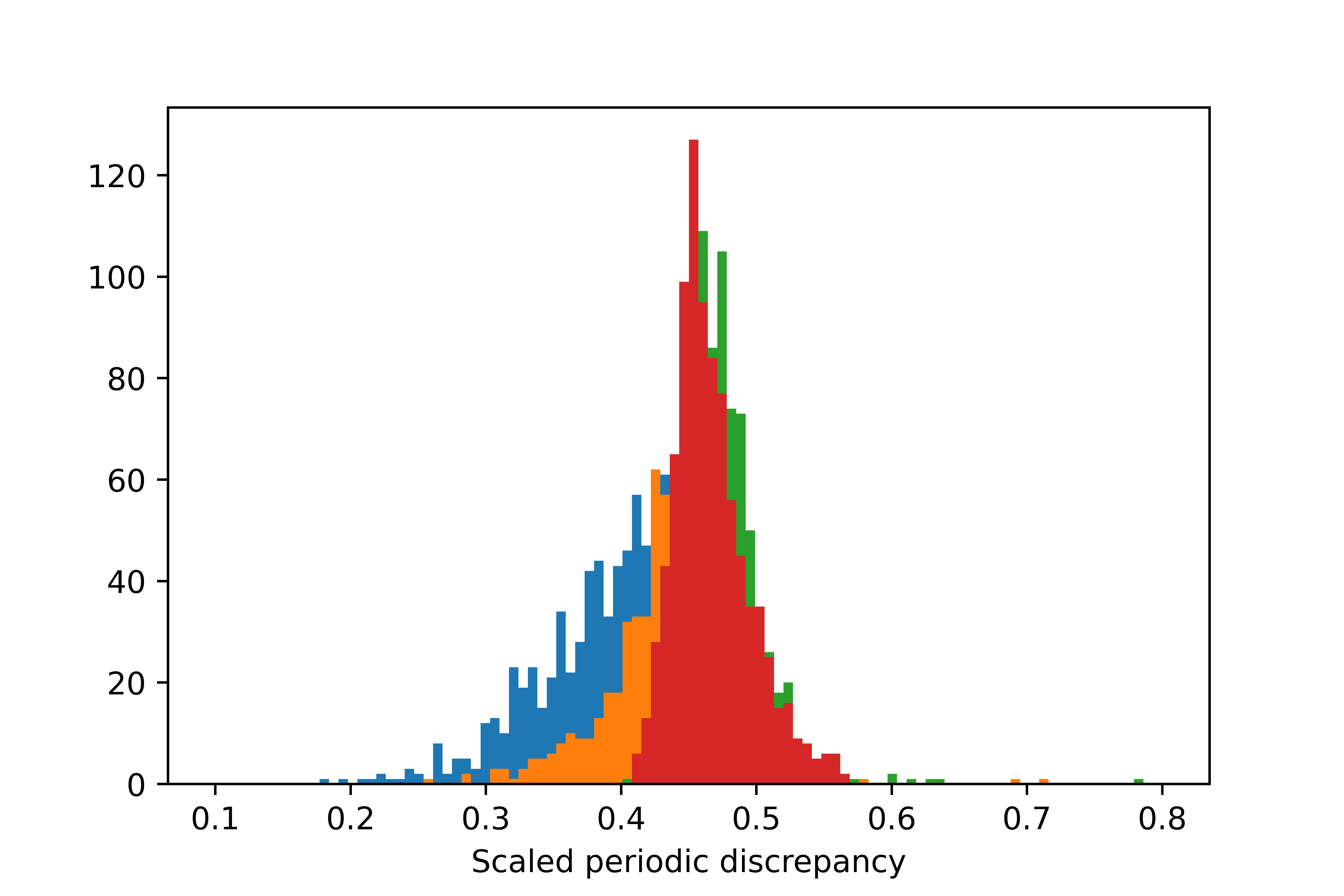}
    \caption{Left: $N D_N/\log{N}$ for three excellent sequences and the new sequences from \S 5.1. Right: a histogram of the values.}
    \label{fig:perio}
\end{figure}

As a second experiment, we consider the main object in this paper. For the purpose of excellent numerics, we replace the energy
$$\sum_{i=1}^n\left|x_i-\frac{i}{n}\right| \qquad \mbox{by the closesly related energy} \qquad \sum_{i=1}^n\left|x_i-\frac{2i-1}{2n}\right|.$$
From a mathematical perspective, these two objects are very nearly indistinguishable and the first one is a bit more convenient to work with. 
\vspace{-10pt}
\begin{figure}[h!]
    \centering
    \includegraphics[width=0.9\linewidth]{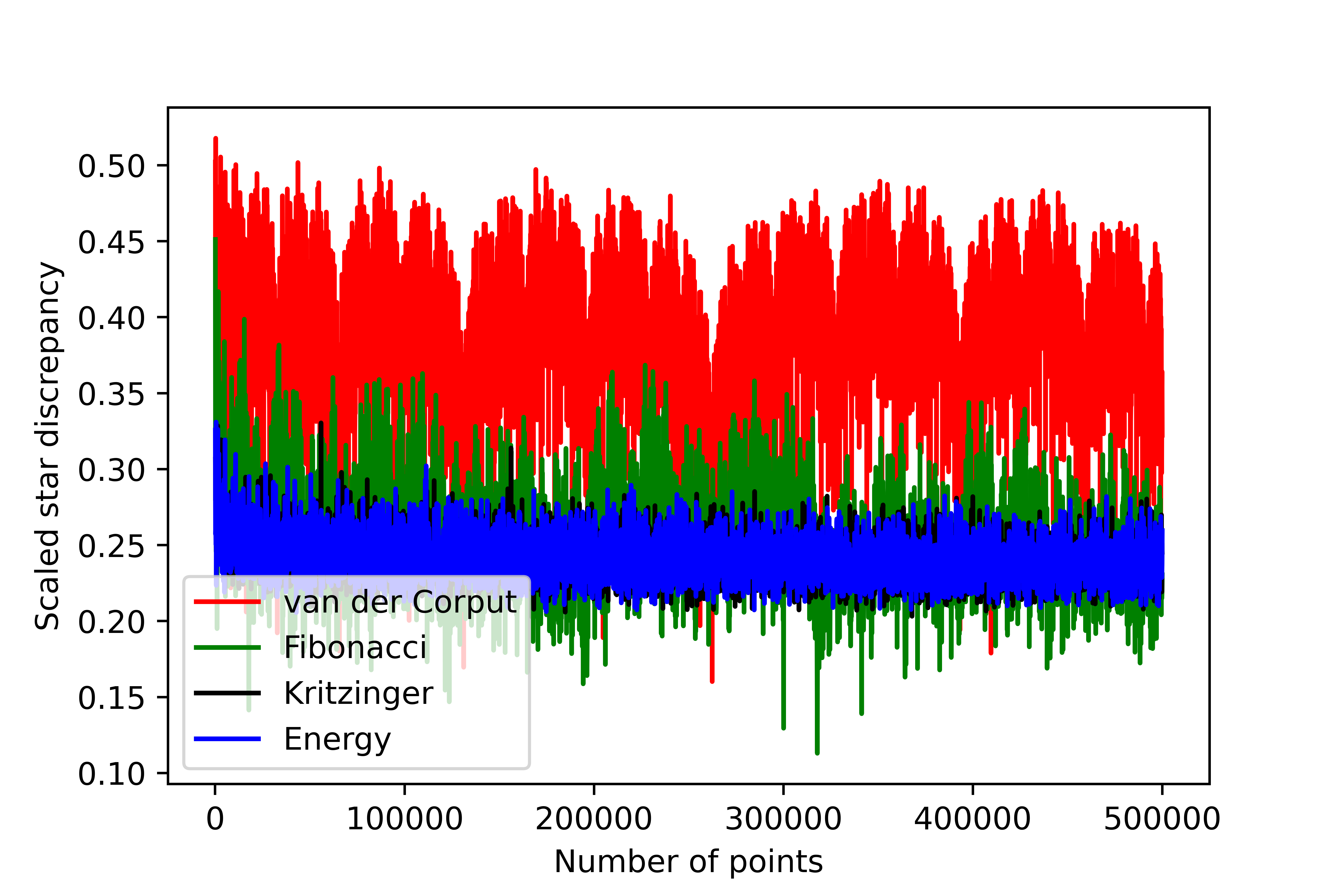}
    \caption{The values of $N D_N^*/\log{N}$ Fibonacci, van der Corput, Kritzinger and Energy sequences. Energy and Kritzinger are extremely similar and, on average, the best performers.}
    \label{fig:star}
\end{figure}
\vspace{-10pt}
This is motivated by the fact that the set $\left\{(2i-1)/(2n): i\in \{1,\ldots,n\}\right\}$ is known to be optimal for the star discrepancy in one dimension~\cite{kuipers}. We use a very close variant of the algorithm described in \S 2.2 to obtain the sequence. 
Figure~\ref{fig:star} compares the star discrepancy of this sequence with state-of-the-art constructions. It performs as well as the Kritzinger sequence, which was shown to empirically outperform the Fibonacci sequence on average~\cite{clement}. We believe these results to be fascinating for a number of different reasons, including

\begin{enumerate}
    \item the empirical performance of greedy energy minimization is as good (if not better) than the known classical constructions
    \item our construction and the Kritzinger construction have comparable behavior and appear to still be (albeit very slowly decaying); this suggests that there is a common underlying theory or, alternatively,
    \item that possibly both of these sequences are either asymptotically optimal or close to optimal in some suitable sense.
\end{enumerate}

All of this suggests we are possibly reaching the limit of what can be obtained via greedy constructions, and possibly for one-dimensional sequences in general. Needless to say, a theoretical explanation at that level of regularity is still missing.

\end{document}